\DeclareMathSymbol\nullset{\mathord}{AMSb}{"3F}
\def\set#1\endset{\{\,#1\,\}}
\def\from{\mkern2mu\hbox{\rm :}\mkern2mu}
\def\comp{\mkern2mu\mathchoice%
        {\raise.35ex\hbox{$\scriptscriptstyle\circ$}}
        {\raise.35ex\hbox{$\scriptscriptstyle\circ$}}
        {\raise.14ex\hbox{$\scriptscriptstyle\circ$}}
        {\raise.14ex\hbox{$\scriptscriptstyle\circ$}}}
\let\cong=\equiv
\def\mod#1{\,\,(\text{mod}\,#1)}
\def\choice#1,#2{\binom{#1}{#2}}
\def\kzero#1{k\langle {#1}\rangle_0}
\def\ckzero#1{k[#1]_0}
\def\class#1{x^{[#1]}}
\def\w#1,#2{W_{#1}(#2)}
\def\u#1,#2{U_{#1}(#2)}
\def\ybasis#1{B_{#1}}
\newcounter{parts}
\begin{document}

 \newtheorem{theorem}{Theorem}[section]
 \newtheorem{corollary}{Corollary}[section]
 \newtheorem{lemma}{Lemma}[section]
 \newtheorem{proposition}{Proposition}[section]
 {\newtheorem{definition}{Definition}[section]}
% \def\begin{proof}{\ifdim\lastskip<\smallskipamount\relax\removelastskip
%  \vskip\smallskipamount\fi\leavevmode\noindent\hbox to 0pt{\hfil}{\it Proof.}}
% \def\strutdepth{\dp\strutbox}
% \def\epmarker{\vbox to \strutdepth{\baselineskip\strutdepth\vss\hfill{%
% \hbox to 0pt{\hss\vrule height 4pt width 4pt depth 0pt}\null}}}
% \def\edproofmarker{\strut\vadjust{\kern-2\strutdepth\epmarker}}
%\def\endproof{\edproofmarker\vskip10pt}

%\begin{frontmatter}

\title{Maximal $T$-spaces of the free associative algebra over a finite field}
\author{C. Bekh-Ochir and S. A. Rankin}
%\ead{srankin@uwo.ca}
%\author[cbekhoch]{C. Bekh-Ochir}
%\ead{cbekhoch@gmail.com}
%\address[rankin,cbekhoch]{University of Western Ontario}
\maketitle
\begin{abstract}
  In earlier work, it was established that for any finite field $k$, the  
  free associative $k$-algebra on one generator $x$, denoted by $\ckzero{x}$,
  had infinitely many maximal $T$-spaces, but
  exactly two maximal $T$-ideals (each of which is a maximal $T$-space).
  However, aside from these two $T$-ideals, no examples of maximal
  $T$-spaces of $\ckzero{x}$ have been identified. This paper presents, 
  for each finite field $k$, an infinite sequence of proper $T$-spaces of 
  $\ckzero{x}$ (no one of which is a $T$-ideal), each of finite codimension, 
  and for each one, both a linear basis for the $T$-space itself and a linear basis for a
  complementary linear subspace are provided. Morever, it is proven that
  the first $T$-space in the sequence is a maximal $T$-space of
  $\ckzero{x}$, thereby providing the first example of a maximal $T$-space
  of $\ckzero{x}$ that is not a maximal $T$-ideal.
\end{abstract}

%\begin{keyword}
%  associative algebra\sep polynomial identities\sep $T$-space\sep maximal
%  
%  \MSC[2010] 16R10
%\end{keyword}

%\end{frontmatter}

\section{Introduction}
 Let $k$ be a field, and let $A$ be an associative $k$-algebra. 
 A\hbox{.} V\hbox{.} Grishin introduced the concept of a $T$-space  of
 $A$ (\cite{grI}, \cite{grII}); namely, a linear subspace of $A$ that is
 invariant under the natural action of the transformation monoid  $T$ of
 all $k$-algebra endomorphisms of $A$. A $T$-space of $A$ that is also an
 ideal of  $A$ is called a $T$-ideal of $A$.  For any
 $H\subseteq A$, the smallest $T$-space  of $A$ containing $H$ shall be
 denoted by $H^S$, while the smallest $T$-ideal of $A$ that  contains
 $H$ shall be denoted by $H^T$. The set of all $T$-spaces of $A$ forms a
 lattice under the inclusion ordering.
 
 We shall let $\kzero{X}$ denote the free associative $k$-algebra on a set $X$. Our interest in this
 paper shall be the study of the maximal elements in the lattice $L(\kzero{X})$ for $X$ any nonempty set. 
 It was shown in \cite{rankin} that if $k$ is infinite, then the unique maximal $T$-ideal of $\kzero{X}$
 (more precisely, there is a maximum $T$-ideal) is also the unique maximal
 $T$-space, while the story for $k$ finite was strikingly different. It turned out
 that when $k$ is finite, there are two maximal $T$-ideals, each of which is a also a maximal $T$-space,
 but now there are infinitely many maximal $T$-spaces of
 $\kzero{X}$. This was established by showing that there
 is a natural bijection between the sets of maximal $T$-spaces of
 $\kzero{X}$ and of $\ckzero{x}$, and then proving the result for $\ckzero{x}$. 
 
 While the approach taken in \cite{rankin} treated the cases $p>2$ and $p=2$ separately, 
 in each case an infinite family of $T$-spaces was constructed with the property that
 no maximal $T$-space of $\ckzero{x}$ could contain more than one of the constructed
 $T$-spaces. It was not proven in \cite{rankin} that any of the constructed $T$-spaces
 was in fact maximal, and it has turned out that the maximal $T$-spaces of $\ckzero{x}$
 (other than the maximum $T$-ideal) are elusive creatures.

 Our objective in this paper is to present, for any finite field $k$, another
 infinite sequence of $T$-spaces of $\ckzero{x}$ with the hope that each
 member of the sequence is maximal. Each of these $T$-spaces has finite codimension,
 and for each of these $T$-spaces, we are able to provide both a linear basis for the $T$-space
 and a linear basis for a complementary linear subspace of $\ckzero{x}$. Moreover,
 we shall prove that the first $T$-space in the sequence is maximal.
 
 Throughout the paper, $k$ shall denote an arbitrary field of
 order $q$ and characteristic $p\ge2$.
 
 Let $X$ be any nonempty set. In $\kzero{X}$, if $|X|=1$, let $T^{(2)}=\set 0\endset$, 
 and $Z_X=\set x^2\endset^T$, where $X=\set x\endset$, otherwise let
 $x,y\in X$ with $x\ne y$ and set $T^{(2)}=\set [x,y]\endset^T_X$, and
 $Z_X=\set xy\endset^T$. For any $x\in X$, let $W=T^{(2)}+\set x-x^q\endset^T_X$.

 For any finite field $k$, and any nonempty set $X$,
 $Z$ and $W$ are maximal $T$-ideals of $\kzero{X}$, and these are 
 the only maximal $T$-ideals of $\kzero{X}$. It was established in \cite{rankin} that each 
 is a maximal $T$-space of $\kzero{X}$. As well, it was established that for $x\in X$, 
 the map $\pi\from L(\kzero{X})\to L(\ckzero{x})$
 that is determined by sending each $y\in X$ to $x$ induces a bijection from
 the set of maximal $T$-spaces of $\kzero{X}$ onto the set of maximal $T$-spaces of
 $\ckzero{x}$. This established that every  maximal $T$-space of
 $\kzero{X}$ is uniquely determined by its one-variable polynomials.

 The following notion will be of fundamental importance in our work.
 Recall that $k$ is a finite field of order $q$. 
 For monomials $u_i\in \kzero{X}$ and $\alpha_i\in k$, $1\le i\le t$,
 $f=\sum_{i=1}^t \alpha_i u_i$ shall be said to be $q$-homogeneous if 
 for each $x\in X$ and each $i,j$ with $1\le i,j\le t$,  $\deg_x(u_i)
 \cong \deg_x(u_j)\mod{\mkern 4mu q-1}$.

\section{A sequence $W_n$, $n\ge1$, of $T$-spaces of $\kzero{X}$}

\begin{definition}
 Let $X$ be a nonempty set, and let  $x\in X$.
 For each $n\ge1$, let $\w n,X$ denote the $T$-space of $\kzero{X}$ that is generated by
 $x+x^{q^n}$ and $x^{q^n+1}$; that is, $\w n,X=\set x + x^{q^n}\endset^S + \set x^{q^n+1}\endset^S$.
 As well, let $\u n,X=\set x-x^{q^{2n}}\endset^T$ in $\kzero{X}$. If $X$ is finite, say $X=\set x_1,x_2,\ldots,x_m\endset$,
 we shall write $\w n,{x_1,x_2,\ldots,x_m}$ and $\u n,{x_1,x_2,\ldots,x_m}$ for $\w n,X$ and $\u n,X$, respectively.
 Finally, if $X=\set x\endset$, we shall simply write $W_n$ and $U_n$ for $\w n,X$ and $\u n,X$, respectively.
\end{definition}

There is a very important observation that we may make about $U_n$ that will have interesting applications in the work to come.

\begin{lemma}\label{lemma: almost like 1}
 Let $x\in X$. Then for any $u\in \ckzero{x}\subseteq \kzero{X}$, $x^{q^{2n}-1}u\cong u\mod{U_n}$.
\end{lemma}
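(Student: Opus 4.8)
The plan is to observe that the statement follows from the ideal structure of $U_n$ alone, with no appeal to its invariance under endomorphisms. Set $N = q^{2n}$, so that the generator of $U_n$ is $x - x^N$ and the target congruence reads $x^{N-1}u \cong u \mod{U_n}$. Since $\ckzero{x}$ is spanned over $k$ by the monomials $x^m$ with $m \ge 1$, and both sides are $k$-linear in $u$, the first step is to reduce to the case $u = x^m$; the general statement then follows by forming $k$-linear combinations.

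For a single monomial the claim becomes $x^{N+m-1} - x^m \in U_n$. I would dispatch the case $m = 1$ immediately, since $x^N - x = -(x - x^N)$ is, up to sign, the defining generator of $U_n$. For $m \ge 2$, I would use that $x^{m-1}$ is a genuine element of $\ckzero{x} \subseteq \kzero{X}$ (here $m - 1 \ge 1$) and that $U_n$ is a two-sided ideal, so that
\[
 x^{m-1}\bigl(x - x^{N}\bigr) = x^m - x^{N+m-1} \in U_n,
\]
which is exactly the desired membership. Summing over the monomials of $u$ then completes the argument.

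I do not expect a genuine obstacle here; the only point requiring care is the non-unital setting. Because $\ckzero{x}$ has no identity, one cannot simply ``factor out $x^{0}$'', so the exponent $m = 1$ must be treated separately rather than as a degenerate instance of the left-multiplication computation. (One should also note in passing that $x^{N-1} = x^{q^{2n}-1}$ is itself a legitimate element of $\ckzero{x}$, since $q \ge 2$ and $n \ge 1$ force $q^{2n} - 1 \ge 3$.) Conceptually, the factor $x^{N-1}$ appearing in the lemma is precisely what results from shifting the generating relation $x \cong x^{N} \mod{U_n}$ up by one power of $x$, so that the lemma records the fact that multiplication by $x^{q^{2n}-1}$ acts as the identity on one-variable polynomials modulo $U_n$.
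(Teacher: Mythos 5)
Your proposal is correct and matches the paper's own argument: both reduce by linearity to monomials $u=x^m$, dispose of $m=1$ directly from the defining generator, and for $m\ge 2$ use the two-sided ideal property of $U_n$ to multiply the generator $x-x^{q^{2n}}$ by $x^{m-1}$ (the paper writes this as $x^{q^{2n}}x^{i-1}\cong xx^{i-1}\mod{U_n}$). Your explicit remark about the non-unital setting is exactly the reason the paper also treats the exponent $1$ separately, so there is nothing to add.
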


\begin{proof}
 It suffices to prove the result for $u=x^i$, $i\ge 1$.
 If $i=1$, the result follows from the definition of $U_n$. Suppose that $i\ge 2$. Then
 $x^{q^{2n}-1}x^i=x^{q^{2n}}x^{i-1}\cong xx^{i-1}=x^i\mod{U_{n}}$.
\end{proof}

\begin{lemma}\label{lemma: fundamental wn}
 Let $n\ge1$. Then for any $u,v\in \kzero{X}$, $uv^{q^n}+u^{q^n}v\in \w n,X$.
\end{lemma}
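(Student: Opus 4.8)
The plan is to deduce everything from the two defining generators $x + x^{q^n}$ and $x^{q^n+1}$ of $W_n(X)$, using only that a $T$-space is a linear subspace of $k\langle X\rangle_0$ closed under all $k$-algebra endomorphisms. Write $m = q^n$. The first observation is that, for every $w \in k\langle X\rangle_0$, applying the endomorphism $x \mapsto w$ to the two generators shows $w + w^m \in W_n(X)$ and $w^{m+1} \in W_n(X)$. Every relation I use below is of this ``universal'' kind, valid for arbitrary elements, so I may work directly with the given $u,v$ and never pass to free variables; note too that the expansion $(u+v)^{m+1} = \sum_w w(u,v)$ over all words $w$ of length $m+1$ in $u,v$ is a mere consequence of distributivity and so holds for any $u,v$.

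From $w + w^m \in W_n(X)$ I would extract the key engine: subtracting the images at $u$, at $v$, and at $u+v$ and cancelling the linear parts yields $(u+v)^m - u^m - v^m \in W_n(X)$. Instantiated at arbitrary elements $a,b$ this says that the sum of all mixed monomials of length $m$ in $a,b$ lies in $W_n(X)$; equivalently, $m$-th powers are additive modulo $W_n(X)$. The same polarization applied to $w^{m+1}$ gives $(u+v)^{m+1} - u^{m+1} - v^{m+1} \in W_n(X)$, i.e. the sum of all mixed monomials of length $m+1$ lies in $W_n(X)$. Replacing $v$ by $\lambda v$ for $\lambda \in k^\times$ and running a Vandermonde argument over $k^\times$ refines this to membership of each ``$q$-homogeneous piece'': the sum of those mixed monomials whose $v$-degree lies in a fixed residue class modulo $q-1$. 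Since $m \equiv 1 \pmod{q-1}$, both $uv^m$ and $u^m v$ sit in the residue class $1$, so the Vandermonde refinement localizes the problem: the pieces in every other residue class already lie in $W_n(X)$, and only the residue-$1$ piece needs further work.

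The heart of the argument is then to take the residue-$1$ piece of the length-$(m+1)$ expansion and collapse all of its monomials onto the two targets $uv^m$ and $u^m v$, using the additive relation $(a+b)^m - a^m - b^m \in W_n(X)$ instantiated at composite arguments $a,b$ that are themselves monomials in $u$ and $v$. This is exactly how the smallest case $m=2$ runs: there the additive relation is simply $ab + ba \in W_1(X)$, and applied at $(a,b) = (uv,u),(u^2,v),(uv,v),(u,v^2)$ it rewrites $uvu, vu^2, vuv, v^2u$ in terms of $u^2v$ and $uv^2$; feeding these into $(u+v)^3 - u^3 - v^3 \in W_1(X)$ leaves precisely $u^2v + uv^2 \in W_1(X)$. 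I would carry out the analogous, longer reduction for general $m$, organizing the monomials by $v$-degree, handling the intermediate degrees $2,\dots,m-1$ (which should vanish modulo $W_n(X)$) and peeling off the top and bottom degrees against the additive relations, ideally by an induction on the $v$-degree.

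The step I expect to be the main obstacle is precisely this reduction. The essential difficulty is that $W_n(X)$ is a $T$-space, not an ideal, so congruences modulo $W_n(X)$ cannot be multiplied: every rewriting of a monomial must be realized as a genuine $T$-space membership obtained by instantiating the additive relation at composite elements, never by multiplying a known congruence by an outside factor. Compounding this, because $k$ is finite the scalar and Vandermonde extractions isolate only $q$-homogeneous combinations of monomials, never a single multidegree, so the bookkeeping that guarantees that exactly $uv^m + u^m v$, and nothing else, survives the collapse is delicate. Controlling this combinatorics is where the real work lies.
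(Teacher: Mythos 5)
There is a genuine gap: your proposal is a program, not a proof, and the part you postpone is the entire content of the lemma. The preliminary steps are all correct --- $w+w^{q^n}\in W_n(X)$ and $w^{q^n+1}\in W_n(X)$ for every $w\in\kzero{X}$, hence the polarized relations $(u+v)^{q^n}-u^{q^n}-v^{q^n}\in W_n(X)$ and $(u+v)^{q^n+1}-u^{q^n+1}-v^{q^n+1}\in W_n(X)$, and the Vandermonde extraction of the $q$-homogeneous pieces. But the ``collapse'' of the residue-one piece of the length-$(q^n+1)$ expansion onto exactly $uv^{q^n}+u^{q^n}v$, by instantiating the additive relation at composite arguments, is precisely what the lemma asserts, and you carry it out only for $q^n=2$. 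You offer no general mechanism: no explicit family of substitution pairs, no induction whose step is checked, only the hope that the intermediate $v$-degrees ``should vanish.'' (For what it is worth, the next case also succumbs to your trick: for $q=3$, $n=1$, the residue-one piece of $(u+v)^4-u^4-v^4$ is $u^3v+u^2vu+uvu^2+vu^3+uv^3+vuv^2+v^2uv+v^3u$, and subtracting the instances of $a^2b+aba+ba^2\in W_1(X)$ at $(a,b)=(u,vu)$ and $(a,b)=(v,vu)$ leaves exactly $u^3v+uv^3$. But two small cases are not the combinatorial argument you yourself identify as the real work.)

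The comparison with the paper explains why you ended up fighting this battle at all. The paper's proof is three lines: polarize only $x^{q^n+1}$ and compute $(u+v)^{q^n+1}=(u+v)^{q^n}(u+v)=(u^{q^n}+v^{q^n})(u+v)$, so that subtracting $u^{q^n+1}$ and $v^{q^n+1}$ leaves $u^{q^n}v+v^{q^n}u$. The middle equality is the Frobenius identity, legitimate because $q^n$ is a power of $p=\operatorname{char}k$ --- but only when $u$ and $v$ commute. Your implicit refusal to use it is mathematically sound: in $\kzero{X}$ with $|X|\ge 2$ it fails (already $(x+y)^2\ne x^2+y^2$), and noncommutatively the paper's conclusion $u^{q^n}v+v^{q^n}u$ is not even the same element as the stated $uv^{q^n}+u^{q^n}v$. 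So the paper in effect proves the commutative case, which is the only case it ever uses downstream (all later applications are in $\ckzero{x}$ or in the commutative algebra $\ckzero{x,y}$ of Section~4), while your plan targets the literal statement in the free algebra, where the paper's argument does not apply. Either way your write-up falls short: if commuting $u,v$ suffice, the Frobenius one-liner makes your whole apparatus unnecessary; if the noncommutative statement is genuinely wanted, the collapse you defer still has to be done.
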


\begin{proof}
 For any $u,v\in \kzero{X}$, we have $(u+v)^{q^n+1}=(u+v)^{q^n}(u+v)=(u^{q^n}+v^{q^n})(u+v)
 =u^{q^n+1}+u^{q^n}v+v^{q^n}u+(uv)^{q^n}$. Since $(u+v)^{q^n+1}$, $u^{q^n+1}$, and $v^{q^n+1}$
 each belong to $\w n,X$, it follows that $u^{q^n}v+v^{q^n}u\in \w n,X$.
\end{proof}

\begin{lemma}
 For every $n\ge1$,  $\u n,X\subseteq \w n,X$.
\end{lemma}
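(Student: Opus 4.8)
The plan is to prove the containment in two stages: first that the defining element of $\u n,X$ lies in $\w n,X$, and then that $\w n,X$ absorbs the two-sided multiplications through which this element generates the whole $T$-ideal. For the first stage, note that $x+x^{q^n}\in\w n,X$ by definition, and that applying the $k$-algebra endomorphism determined by $x\mapsto x^{q^n}$ gives $x^{q^n}+x^{q^{2n}}\in\w n,X$ as well, since $\w n,X$ is a $T$-space. Subtracting, $x-x^{q^{2n}}\in\w n,X$, and a further substitution $x\mapsto u$ yields $u-u^{q^{2n}}\in\w n,X$ for every $u\in\kzero{X}$.

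For the second stage I would use the standard description of the $T$-ideal generated by a single element: $\u n,X$ is the linear span of the elements $a\,(u-u^{q^{2n}})\,b$, where $u\in\kzero{X}$ and $a,b$ range over the monomials of $\kzero{X}$ (including the empty one). Thus it suffices to show $a\,u\,b\equiv a\,u^{q^{2n}}\,b\pmod{\w n,X}$ for all such $a,b,u$. The main tool is Lemma~\ref{lemma: fundamental wn}, which in the form $u^{q^n}v\equiv -u\,v^{q^n}\pmod{\w n,X}$ transfers a $q^n$-power across a product. Two applications give the swap relation $u^{q^{2n}}v\equiv u\,v^{q^{2n}}\pmod{\w n,X}$: writing $u^{q^{2n}}=(u^{q^n})^{q^n}$ and transferring twice, $u^{q^{2n}}v\equiv -u^{q^n}v^{q^n}\equiv u\,v^{q^{2n}}$. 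Together with the generator relation $s^{q^n}\equiv -s\pmod{\w n,X}$, the substitution instance of $x+x^{q^n}$ at $x\mapsto s$, these identities are what I would use to drive the $q^{2n}$-twist from one end of a word to the other.

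The principal obstacle is that $\w n,X$ is only a $T$-space, not a $T$-ideal, so a congruence modulo $\w n,X$ cannot in general be multiplied through by a ring element; in particular the swap relation is a statement about a two-factor product and does not, by itself, let one strip the $q^{2n}$-power off an \emph{interior} factor $u$ of $a\,u^{q^{2n}}\,b$. The crux is therefore to show that raising a single factor of a product to the power $q^{2n}$ leaves its class modulo $\w n,X$ unchanged; equivalently, that the $q^n$-power map is multiplicative modulo $\w n,X$, i.e. $(uv)^{q^n}\equiv u^{q^n}v^{q^n}\pmod{\w n,X}$.

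I expect to obtain this multiplicativity by polarizing the generator $x^{q^n+1}$ and combining the resulting relations with repeated use of Lemma~\ref{lemma: fundamental wn}, and I anticipate that it is precisely this step, rather than the easy membership of $x-x^{q^{2n}}$, that carries the weight of the argument. Once it is in hand, one obtains $a\,u\,b\equiv a\,u^{q^{2n}}\,b\pmod{\w n,X}$ for all monomials $a,b$ and all $u$, whence every spanning element $a\,(u-u^{q^{2n}})\,b$ of $\u n,X$ lies in $\w n,X$, giving $\u n,X\subseteq\w n,X$ as required.
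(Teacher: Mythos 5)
Your first stage and your ``swap relation'' are both correct, and you have put your finger on exactly the right obstacle: congruences modulo a $T$-space cannot be multiplied by ring elements. But the proposal stops where the proof has to start. The step you yourself call the crux --- the multiplicativity $(uv)^{q^n}\cong u^{q^n}v^{q^n}\mod{\w n,X}$ --- is never proved; it is only ``expected \dots\ by polarizing the generator $x^{q^n+1}$.'' This is a genuine gap, and not a routine one to fill, because modulo the two relations you already have, multiplicativity is \emph{equivalent} to a principal case of the lemma you are trying to prove: the generator relation gives $(uv)^{q^n}\cong -uv$, and Lemma \ref{lemma: fundamental wn} applied to the pair $(u,v^{q^n})$ gives $u^{q^n}v^{q^n}\cong -uv^{q^{2n}}$, whence
$$
 (uv)^{q^n}-u^{q^n}v^{q^n}\cong -\bigl(uv-uv^{q^{2n}}\bigr)\mod{\w n,X}.
$$
So proving multiplicativity is precisely proving that the left multiples $u(v-v^{q^{2n}})$ lie in $\w n,X$; your plan restates the problem rather than reducing it. In addition, the closing implication --- that multiplicativity yields $aub\cong au^{q^{2n}}b\mod{\w n,X}$ for an interior factor --- is also unjustified in $\kzero{X}$: the natural deduction either right-multiplies a congruence by $b$ (the forbidden operation) or uses $(ub)^{q^{2n}}=u^{q^{2n}}b^{q^{2n}}$, which fails for noncommuting variables.

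The paper's proof shows that the difficulty can be bypassed rather than confronted: it never manipulates congruences, only memberships of fully expanded products. One checks that $(w+w^{q^n})(v+v^{q^n})\in\w n,X$ for all $w,v$, because the expansion splits as $\bigl(wv+(wv)^{q^n}\bigr)+\bigl(wv^{q^n}+w^{q^n}v\bigr)$, a substitution instance of $x+x^{q^n}$ plus an instance of Lemma \ref{lemma: fundamental wn}; subtracting the cases $w=u$ and $w=u^{q^n}$ gives $(u-u^{q^{2n}})(v+v^{q^n})\in\w n,X$; and re-expanding that same product as
$$
 (u-u^{q^{2n}})v+\bigl(uv^{q^n}+u^{q^n}v\bigr)-\bigl(u^{q^n}v+(u^{q^n}v)^{q^n}\bigr)
$$
shows, for the same two reasons, that $(u-u^{q^{2n}})v\in\w n,X$. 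Note that these expansions use $w^{q^n}v^{q^n}=(wv)^{q^n}$, so they are really computations in the commutative setting; but that is the setting ($\ckzero{x}$ and $\ckzero{x,y}$) in which the lemma is later applied, and there the one-sided multiples $(u-u^{q^{2n}})v$ already span the $T$-ideal, so the two-sided products over which your outline stalls never need separate treatment.
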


\begin{proof}
 Let $u,v\in\kzero{X}$. Then $(u+u^{q^n})(v+v^{q^n})=uv+uv^{q^n}+u^{q^n}v+(uv)^{q^n}$.
 As $uv+(uv)^{q^n}\in \w n,X$ by definition, and by Lemma \ref{lemma: fundamental wn}, $uv^{q^n}+u^{q^n}v\in \w n,X$,
 it follows that $(u+u^{q^n})(v+v^{q^n})\in \w n,X$. Now note that $(u-u^{q^{2n}})(v+v^{q^n})=
 (u+u^{q^{n}})(v+v^{q^n})-(u^{q^n}+(u^{q^{n}})^{q^n})(v+v^{q^n})$, and thus, since
 $(u+u^{q^n})(v+v^{q^n})\in \w n,X$, and $(u^{q^n}+(u^{q^{n}})^{q^n})(v+v^{q^n})\in \w n,X$, we have
 $(u-u^{q^{2n}})(v+v^{q^n})\in \w n,X$. But 
 \begin{align*}
   (u-u^{q^{2n}})(v+v^{q^n})&=(u-u^{q^{2n}})v+ uv^{q^n}-u^{q^{2n}}v^{q^{2n}}\\
   &=(u-u^{q^{2n}})v+ uv^{q^n}+u^{q^n}v-(u^{q^n}v+(u^{q^{n}}v)^{q^n}).
 \end{align*} 
 By Lemma \ref{lemma: fundamental wn}, $uv^{q^n}+u^{q^n}v\in\w n,X$, and by definition, $u^{q^n}v+(u^{q^{n}}v)^{q^n}\in \w n,X$.
 As well, we have shown that $(u-u^{q^{2n}})(v+v^{q^n})\in \w n,X$, and so it follows that $(x-x^{q^{2n}})v\in \w n,X$.
\end{proof}

In the proof of the preceding lemma, we showed that $(u+u^{q^n})(v+v^{q^n})\in \w n,X$ for every $u,v\in \kzero{X}$.
We can say more in this regard. For any $u,v\in\kzero{X}$, we have 
$$
 (u+u^q)v^{q^n+1}=uv^{q^n+1}+u^{q^n}v^{q^n+1}\cong uv^{q^n+1}-u(v^{q^n+1})^{q^n}\mod{\w n,X}.
$$
As well, $u(v^{q^n+1})^{q^n}=uv^{q^{2n}+q^n}\cong uv^{1+q^n}\mod{\u n,X}$.
Since $\u n,X\subseteq \w n,X$, we have $(u+u^q)v^{q^n+1}\cong uv^{q^n+1}-uv^{q^n+1}=0\mod{\w n,X}$, so $(u+u^q)v^{q^n+1}\in 
\w n,X$. A similar argument shows that $v^{q^n+1}(u+u^q)\in \w n,X$. Thus for each $n\ge1$, $\w n,X$ is a subalgebra
of $\kzero{X}$.

We now explore more carefully the case when $X=\set x\endset$, in which case $\kzero{X}=\ckzero{x}$.

\begin{lemma}\label{lemma: un basis}
  The set $\set (x^{q^{2n}}-x) x^i \mid i\ge0\endset$ is a linear basis for $U_n$.
\end{lemma}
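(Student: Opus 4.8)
The plan is to set $V=\operatorname{span}_k\set (x^{q^{2n}}-x)x^i \mid i\ge0\endset$ and to prove three things: that each listed element lies in $U_n$, that the listed elements are linearly independent, and that they span $U_n$ (equivalently, that $V=U_n$). Containment $V\subseteq U_n$ is immediate, since $U_n$ is an ideal containing $x^{q^{2n}}-x$, so each $(x^{q^{2n}}-x)x^i$ lies in $U_n$. For linear independence I would use a degree argument: the element $(x^{q^{2n}}-x)x^i=x^{q^{2n}+i}-x^{i+1}$ has top-degree term $x^{q^{2n}+i}$, and as $i$ ranges over $0,1,2,\dots$ these leading degrees $q^{2n}+i$ are strictly increasing and pairwise distinct; hence any nontrivial finite linear combination has a nonzero term of maximal degree and so cannot vanish.

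The real content is the reverse inclusion $U_n\subseteq V$. Recall that $\ckzero{x}$ is commutative (it is generated by the single element $x$) and that $U_n=\set x-x^{q^{2n}}\endset^T$ is by definition the smallest $T$-ideal containing $x-x^{q^{2n}}=-(x^{q^{2n}}-x)$. It therefore suffices to prove that $V$ is itself a $T$-ideal containing $x-x^{q^{2n}}$. Membership of the generator is clear, since $x-x^{q^{2n}}=-(x^{q^{2n}}-x)x^0\in V$, and $V$ is an ideal by commutativity, because $x^j\cdot (x^{q^{2n}}-x)x^i=(x^{q^{2n}}-x)x^{i+j}\in V$ for all $i\ge0$ and $j\ge1$.

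The main obstacle, and the only step that is not purely formal, is showing that $V$ is a $T$-space, i.e.\ that $V$ is closed under every endomorphism of $\ckzero{x}$. Such an endomorphism is a substitution $x\mapsto g$ with $g\in\ckzero{x}$, so by linearity it is enough to check that $(g^{q^{2n}}-g)g^i\in V$ for every $g\in\ckzero{x}$ and every $i\ge0$. I would verify this by passing to the quotient $Q=\ckzero{x}/V$, in which the defining relations of $V$ become $\bar x^{q^{2n}}=\bar x$. The key claim is then that $h^{q^{2n}}=h$ for every $h\in Q$: writing $h=\sum_j c_j\bar x^j$ and using that $q^{2n}$ is a power of the characteristic $p$ together with the commutativity of $Q$, the Frobenius collapses the sum to $h^{q^{2n}}=\sum_j c_j^{q^{2n}}\bar x^{jq^{2n}}$; since each $c_j\in k$ satisfies $c_j^{q}=c_j$ and hence $c_j^{q^{2n}}=c_j$, and since $\bar x^{jq^{2n}}=(\bar x^{q^{2n}})^j=\bar x^j$, we recover $h^{q^{2n}}=\sum_j c_j\bar x^j=h$. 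Applying this with $h=\bar g$ gives $\bar g^{q^{2n}}=\bar g$, so $(g^{q^{2n}}-g)g^i$ maps to $(\bar g^{q^{2n}}-\bar g)\bar g^i=0$ in $Q$; that is, $(g^{q^{2n}}-g)g^i\in V$. This establishes that $V$ is a $T$-space, hence a $T$-ideal containing $x-x^{q^{2n}}$, which forces $U_n\subseteq V$ and completes the argument. The crux is exactly the ``Frobenius is the identity on $Q$'' computation, which is precisely where the finiteness of $k$ and the fact that $q$ is a power of $p$ are used; everything else is formal.
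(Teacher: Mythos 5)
Your proof is correct, but it takes a genuinely different route from the paper's. The paper argues top-down: it takes an arbitrary $u\in U_n$, written as $\sum_i \alpha_i(u_i^{q^{2n}}-u_i)v_i$, uses the $k$-linearity of the map $v\mapsto v^{q^{2n}}-v$ (Frobenius together with $\alpha^{q^{2n}}=\alpha$ for $\alpha\in k$) to reduce to the case where every $u_i$ is a monomial $x^{r_i}$, and then factors $(x^{r_i})^{q^{2n}}-x^{r_i}=(x^{q^{2n}}-x)w_i$ as a difference of $r_i$-th powers, which places $u$ in the span of $\set (x^{q^{2n}}-x)x^j\mid j\ge 0\endset$; independence is by degrees, essentially as in your argument. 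You argue bottom-up: you show that the span $V$ is itself a $T$-ideal containing $x-x^{q^{2n}}$, so that $U_n\subseteq V$ follows from minimality of $U_n$, the key step being that $h\mapsto h^{q^{2n}}$ is the identity map on the quotient $\ckzero{x}/V$. Both proofs rest on the same finite-field inputs (additivity of $h\mapsto h^{q^{2n}}$ in a commutative algebra of characteristic $p$, and $c^q=c$ for $c\in k$), but your version buys two things: it sidesteps writing down the general form of an element of a $T$-ideal in a non-unital algebra (the paper's expression $\sum_i\alpha_i(u_i^{q^{2n}}-u_i)v_i$ quietly omits the terms having no right factor $v_i$, such as the generator itself --- a minor gap your minimality argument never has to confront), and it isolates the structural fact that $\ckzero{x}/U_n$ satisfies the identity $h^{q^{2n}}=h$, which strengthens Lemma \ref{lemma: almost like 1}. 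What the paper's computation buys in exchange is constructiveness: it exhibits an explicit decomposition of each given element of $U_n$ in terms of the proposed basis, rather than inferring its existence through the quotient.
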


\begin{proof}
  For any $\alpha,\beta\in k$ and $v,w\in \ckzero{x}$,
  $(\alpha v+\beta w)^{q^{2n}}-(\alpha v+\beta w)=\alpha v^{q^{2n}}+\beta w^{q^{2n}}-\alpha v-\beta w
  =\alpha(v^{q^{2n}}-v)+\beta(w^{q^{2n}}-w)$.
  Consider $u\in U_n$. We have $u=\sum_{i=1}^t \alpha_i (u_i^{q^{2n}}-u_i)v_i$ for some $u_1,v_1,u_2,v_2,
  \ldots,u_t,v_t\in \ckzero{x}$ and $\alpha_1,\ldots,\alpha_t\in k$. By the above observation, we may assume that
  each $u_i$ is a monomial; that is,  we may assume that $u$ has the form
   $u=\sum_{i=1}^t \alpha_i( (x^{r_i})^{q^{2n}} - x^{r_i})v_i$ for positive
  integers $r_i$, $i=1,\ldots,t$.  For each $i$, by factoring as a difference of $r_i$ powers,
  we may write $(x^{r_i})^{q^{2n}} - x^{r_i}=(x^{q^{2n}} - x)w_i$, for some 
  $w_i\in \ckzero{x}$. For each $i$, $(x^{q^{2n}} - x)w_iv_i$ is in the linear space spanned by
  $\set (x^{q^{2n}} - x)x^j\mid j\ge0\endset$. Since $x^{q^{2n}} - x\in U_n$, we have $(x^{q^{2n}} - x)x^j\in U_n$ for
  each $j\ge1$, and so it follows that the set $\set (x^{q^{2n}}-x) x^i \mid i\ge0\endset$ is a
  spanning set for $U_n$. The linear independence is immediate since no two polynomials
  in the set have the same degree.
\end{proof}

The set $\set (x^{q^{2n}}-x) x^i \mid i\ge0\endset$ contains exactly one polynomial of each degree greater than
or equal to $q^{2n}$, and so the dimension of $\ckzero{x}/U_n$ as a $k$-vector space is $q^{2n}-1$.
Note that if $1\le m\le q^{2n}-1$, then by the division theorem, there exist unique integers $t$ and $r$ with
$m=tq^n+r$ and $0\le r<q^n$. Since $n\le q^{2n}-1$, we have $tq^n+r\le q^{2n}-1$, so $t\le q^n-(r+1)/q^n\le q^n-1/q^n$.
Since $t$ is an integer, it follows that $t\le q^n-1$, so we have $0\le t,r\le q^b-1$ and not both $t$ and $r$ can
be 0. The uniqueness of $t$ and $r$ establishes that no two polynomials in the set
$$
\set x^{q^ni+j}+x^{i+q^nj}\mid q^n>i> j\ge 0\endset \cup \set (x^{q^n+1})^i\mid 1\le i\le q^n-1\endset
$$
have the same degree, which establishes the following fact.

\begin{lemma}\label{lemma: fundamental set}
 The set 
 $$
  \set x^{q^ni+j}+x^{i+q^nj}\mid q^n>i> j\ge 0\endset \cup \set (x^{q^n+1})^i\mid 1\le i\le q^n-1\endset
 $$
 is linearly independent in $\ckzero{x}$.
\end{lemma}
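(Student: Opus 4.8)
The plan is to work in the monomial linear basis $\set x^i\mid i\ge1\endset$ of $\ckzero{x}$ and to show that the polynomials in the displayed set have pairwise disjoint support with respect to this basis. Since any family of nonzero vectors whose supports in a fixed basis are pairwise disjoint is automatically linearly independent, this will suffice. The essential bookkeeping has already been carried out in the paragraph preceding the statement: every integer $m$ with $1\le m\le q^{2n}-1$ admits a unique representation $m=tq^n+r$ with $0\le t,r\le q^n-1$, not both zero. I would read the exponent of each monomial occurring in the set through this representation.

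First I would record the support of each generator. For a pair $q^n>i>j\ge0$, the binomial $x^{q^ni+j}+x^{i+q^nj}$ contributes the monomial of degree $q^ni+j$, whose $(t,r)$-representation is $(i,j)$ and therefore satisfies $t>r$, together with the monomial of degree $i+q^nj=q^nj+i$, whose representation is $(j,i)$ and therefore satisfies $t<r$ (the inequality $i<q^n$ guarantees that $r=i$ is a legitimate digit, and $i>j$ makes the two degrees genuinely distinct, so the binomial is nonzero). For $1\le i\le q^n-1$, the monomial $(x^{q^n+1})^i=x^{q^ni+i}$ has representation $(i,i)$, so $t=r$. Thus the monomials supplied by the three families fall, respectively, into the mutually disjoint regions $t>r$, $t=r$, and $t<r$, and within each region the uniqueness of the representation shows that distinct generators employ distinct monomials.

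Combining these observations, no monomial $x^m$ occurs in more than one of the listed polynomials, so their supports are pairwise disjoint; a vanishing linear combination therefore has, as the coefficient of each monomial, a single scalar $\alpha_{ij}$ or $\beta_i$, and equating these coefficients to zero forces every scalar to vanish, which is precisely linear independence. The only point demanding any care is the digit-range check underpinning the trichotomy $t>r$, $t=r$, $t<r$, namely that for a pair with $q^n>i>j\ge0$ both exponents $q^ni+j$ and $i+q^nj$ have their two $q^n$-ary digits lying in $\set 0,1,\dots,q^n-1\endset$ so that the regions are assigned unambiguously; once this is confirmed the disjointness of supports, and hence the asserted independence, is immediate.
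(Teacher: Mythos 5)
Your proof is correct and takes essentially the same approach as the paper: both arguments rest on the uniqueness of the representation $m=tq^n+r$ with $0\le t,r\le q^n-1$ (not both zero) developed in the paragraph preceding the lemma. The only difference is cosmetic: you verify that the full monomial supports are pairwise disjoint, whereas the paper observes only that no two polynomials in the set have the same degree, which already suffices for independence.
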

 
\begin{definition}
 For each $n\ge1$, and $i,j$ with $0\le i,j<q^n$ and $i\ne j$, let $F(i,j)=x^{iq^n+j}+x^{i+jq^n}$, and let
 $F(i,i)=(x^{q^n+1})^i$ if $1\le i<q^n$. Then set
 $$
  E_n=\set F(i,j)\mid q^n>i> j\ge 0\endset \cup \set F(i,i)\mid 1\le i\le q^n-1\endset,
 $$
 and let $V_n$ denote the linear span of $E_n$ in $\ckzero{x}$.
\end{definition} 

It follows from Lemma \ref{lemma: fundamental set} that the dimension of $V_n$ (as a $k$-vector space) is $\choice q^n,2 + q^n-1$.
Furthermore, we note that if $0\le j<i<q^n$, then the degree of $F(i,j)=F(j,i)$ is $iq^n+j$.

Note that if $p>2$, the set $\set x^{q^ni+j}+x^{i+q^nj}\mid q^n>i\ge  j\ge 0,\ i+j>0\endset$ is a basis for $V_n$,
as taking $i=j$ in $x^{q^ni+j}+x^{i+q^nj}$ gives $2(x^{q^n+1})^i$.

\begin{proposition}\label{proposition: wn/un basis}
 For each $n\ge 1$, $W_n=V_n\oplus U_n$.
\end{proposition}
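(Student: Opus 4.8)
The plan is to unpack the statement $W_n = V_n \oplus U_n$ into three assertions and prove each: that $V_n + U_n \subseteq W_n$, that $W_n \subseteq V_n + U_n$, and that $V_n \cap U_n = \set 0\endset$. For the inclusion $V_n + U_n \subseteq W_n$ I would check that each generator of $V_n$ lies in $W_n$. If $q^n > i > j \ge 1$, then taking $u = x^i$ and $v = x^j$ in Lemma \ref{lemma: fundamental wn} yields $x^i(x^j)^{q^n} + (x^i)^{q^n}x^j = F(i,j) \in W_n$; the boundary case $j = 0$ is separate, since $F(i,0) = x^i + (x^i)^{q^n}$ is the image of the generator $x + x^{q^n}$ under the endomorphism $x \mapsto x^i$. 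For the diagonal generators $F(i,i) = (x^{q^n+1})^i$, I use that $x^{q^n+1}$ generates $W_n$ together with the fact, established just above, that $W_n$ is a subalgebra of $\ckzero{x}$, so every power $(x^{q^n+1})^i$ lies in $W_n$. Combined with the earlier lemma that $U_n \subseteq W_n$, this gives $V_n + U_n \subseteq W_n$.

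To see that the sum is direct I would pass to $R := \ckzero{x}/U_n$ and show the projection $\pi$ is injective on $V_n$. By Lemma \ref{lemma: un basis}, $R$ has $k$-basis $\set \bar x^r \mid 1 \le r \le q^{2n}-1\endset$ with $\bar x^{m+(q^{2n}-1)} = \bar x^m$; equivalently, $\pi$ reduces each exponent modulo $q^{2n}-1$ into the range $[1,q^{2n}-1]$. For each $F(i,j)$ with $i > j$ one checks that both exponents $iq^n+j$ and $i+jq^n$ already lie in this range, so $\pi(F(i,j)) = \bar x^{iq^n+j} + \bar x^{i+jq^n}$, whose top-degree term is $\bar x^{iq^n+j}$ (since $q^n > 1$); likewise $\pi(F(i,i)) = \bar x^{i(q^n+1)}$. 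The leading exponents $iq^n+j$, taken over all pairs with $0 \le j \le i \le q^n-1$ and $i \ge 1$, are pairwise distinct by the uniqueness of the base-$q^n$ expansion (the division-theorem argument preceding Lemma \ref{lemma: fundamental set}). Distinct top-degree terms, each with coefficient one, force the $\pi(F(i,j))$ to be linearly independent, so $\pi$ is injective on $V_n$ and $V_n \cap U_n = \set 0\endset$.

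The main work is the inclusion $W_n \subseteq V_n + U_n$, which I would again carry out in $R$. First I would simplify the two generating families. Because $k = \mathbb{F}_q$, the Frobenius map $f \mapsto f^{q^n}$ is $k$-linear, so $\set x + x^{q^n}\endset^S = \mathrm{span}\,\set x^m + x^{mq^n} \mid m \ge 1\endset$. A polarization of $f^{q^n+1} = f\cdot f^{q^n}$ (evaluating at $f = x^a$ and $f = x^a + x^b$) shows that $\set x^{q^n+1}\endset^S$ is spanned by the diagonal elements $x^{a(q^n+1)}$, $a \ge 1$, together with the binomials $x^{a+bq^n} + x^{b+aq^n}$, $a > b \ge 1$. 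It therefore suffices to show each of these three types of element has image under $\pi$ lying in $\pi(V_n)$. The key arithmetic fact is that $q^{2n} \equiv 1 \pmod{q^{2n}-1}$, so multiplication by $q^n$ is an involution on residues modulo $q^{2n}-1$; consequently $x^{b+aq^n} \cong x^{(a+bq^n)q^n} \mod{U_n}$, which collapses the binomial case to the family $x^m + x^{mq^n}$. Reducing $m$ modulo $q^{2n}-1$ and writing the reduced value in base $q^n$ as $iq^n + j$ with $0 \le i,j \le q^n-1$, one finds $x^m + x^{mq^n} \cong x^{iq^n+j} + x^{i+jq^n} \mod{U_n}$, which equals $F(i,j)$, $F(j,i)$, or $2F(i,i)$ according as $i>j$, $i<j$, or $i=j$, and so lies in $V_n$. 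A parallel reduction gives $x^{a(q^n+1)} \cong (x^{q^n+1})^s = F(s,s) \mod{U_n}$ for a suitable $s \in [1,q^n-1]$. This establishes $W_n \subseteq V_n + U_n$.

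I expect this third step to be where all the care is needed, since it hinges on the bookkeeping of base-$q^n$ digits, the involution $q^n\cdot q^n \equiv 1$, and the separate treatment of the boundary cases $i=0$, $j=0$, and $i=j$ (including $i=j=q^n-1$, which reduces to $\bar x^{q^{2n}-1}$, the identity of $R$ supplied by Lemma \ref{lemma: almost like 1}). Assembling the three parts yields $W_n = V_n \oplus U_n$; as a consistency check, the directness established in the second step together with this identity forces $\dim(W_n/U_n) = \dim V_n = \binom{q^n}{2} + q^n - 1$.
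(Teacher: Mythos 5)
Your proposal is correct, and its overall skeleton is the same as the paper's: the inclusion $V_n\subseteq W_n$ comes from Lemma \ref{lemma: fundamental wn} together with substitution instances of the generators; directness comes from the fact that all exponents occurring in $E_n$ lie below $q^{2n}$ (the paper phrases this as a degree count in $\ckzero{x}$, you as a leading-exponent argument in $\ckzero{x}/U_n$ --- same substance, both resting on uniqueness of division by $q^n$); and the containment $W_n\subseteq V_n+U_n$ begins, in both proofs, by using linearity of $f\mapsto f+f^{q^n}$ and bilinearity of the cross term of $(u+v)^{q^n+1}$ (your ``polarization'') to span $W_n$ by the elements $x^m+x^{mq^n}$, $x^{a(q^n+1)}$, and $x^{aq^n+b}+x^{a+bq^n}$. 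The one genuine difference is how the exponents are then reduced. The paper reduces an arbitrary $F(i,j)$ into the range $q^n>i\ge j\ge 0$ by induction on $i$, writing $i=t+q^n$ and applying $x^{q^{2n}}\cong x\mod{U_n}$, which forces a case analysis ($i=j$; $j\ge q^n$; $j<q^n$; and the boundary case $t<j+1=i$, i.e.\ $i=q^n$, $j=q^n-1$). You collapse this into a single computation in $R=\ckzero{x}/U_n$: by Lemma \ref{lemma: un basis} exponents in $R$ are periodic with period $q^{2n}-1$, so you reduce the exponent into $[1,q^{2n}-1]$, read off its base-$q^n$ digits $(i,j)$, and use the involution $m\mapsto mq^n$ on residues (from $q^{2n}\cong 1$) to recognize the result as $F(i,j)$, $F(j,i)$, or $2F(i,i)$, with the diagonal family handled by choosing $s\cong a\mod{q^n-1}$. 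This buys a shorter, non-inductive argument that makes transparent why $\set iq^n+j\mid q^n>i\ge j\ge 0\endset$ serves as a fundamental domain; the cost is only the modular bookkeeping (all exponents must stay $\ge1$, which you observe). Two small remarks: $F(i,i)\in W_n$ is immediate as the substitution instance $(x^i)^{q^n+1}$, so the subalgebra property of $W_n$ need not be invoked; and your parenthetical justification of the polarization claim (evaluating at $x^a$ and $x^a+x^b$) establishes only that the listed elements lie in $\set x^{q^n+1}\endset^S$, whereas the direction you actually use --- that every $f^{q^n+1}$ lies in their span --- requires expanding $f=\sum_m c_mx^m$ and using $c_m^{q^n}=c_m$, which is precisely the bilinearity observation the paper makes at the corresponding point.
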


\begin{proof}
 Note that when $i>j=0$, then $x^{q^ni+j}+x^{i+q^nj}=x^{q^ni}+x^{i}\in
 W_n$, while if $i>j>0$,  $x^{q^ni+j}+x^{i+q^nj}\in W_n$ by virtue of
 Lemma \ref{lemma: fundamental wn}. Thus $V_n\subseteq W_n$.
 Furthermore, as the elements of $E_n$ have degree at most
 $q^n(q^n-1)+q^n-1=q^{2n}-1<q^{2n}$, no two elements of $E_n\cup\set
 (x^{q^{2n}}-x)x^i\mid i\ge 0\endset$ have the same degree, so
 $E_n\cup\set (x^{q^{2n}}-x)x^i\mid i\ge 0\endset$ is linearly independent 
 and $V_n\cap U_n=\set 0\endset$. It remains to prove that $E_n\cup\set
 (x^{q^{2n}}-x)x^i\mid i\ge 0\endset$ is a spanning set for $W_n$. 
 
 Observe that $(u+v)^{q^n+1}=u^{q^n+1}+v^{q^n+1}+(uv^{q^n}+u^{q^n}v)$,
 and the expression $(uv^{q^n}+u^{q^n}v)$ is linear in each of $u$ and
 $v$, so the set  $\set F(i,j)\mid i> j\ge 1\endset\cup \set F(i,i)\mid
 i\ge 1\endset$ is a basis for  $\set x^{q^n+1}\endset^S$, while the set
 $\set F(i,0)\mid i>0\endset$ is a basis for $\set x+x^{q^n}\endset^S$. 
 Thus the set $\set F(i,j)\mid i\ge j,\ i+j\ne 0\endset$ is a linear
 basis for $W_n$. It suffices therefore to prove that for each $i>0$,
 there exists $i_1$ with $q^n>i_1\ge1$ such that $F(i,i)\cong
 F(i_1,i_1)\mod{U_n}$, and for each $j$ with $i>j\ge0$, there exist
 $i_1,j_1$ with $q^n>i_1\ge j_1\ge0$ and $i_1+j_1>0$ such that
 $F(i,j)\cong F(i_1,j_1)\mod {U_n}$. This we do by induction on $i\ge1$.
 The assertion is obviously true for $1\le i\le q^n-1$, so we suppose
 that  $i\ge q^n$ is such that the assertion holds for all smaller
 integers. Let $t=i-q^n\ge0$. Then 
 $F(i,i)=(x^{q^n+1})^i=(x^{q^n+1})^{(t+q^n)}=x^{q^{2n}+q^nt+t+q^n}\cong
 x^{1+q^nt+t+q^n}=F(t+1,t+1)\mod{U_n}$, and $t+1<t+q^n=i$, so by the
 induction hypothesis, there exists $i_1<q^n$ such that $F(t+1,t+1)\cong
 F(i_1,i_1)\mod{U_n}$. But then $F(i,i)\cong F(t+1,t+1)\cong
 F(i_1,i_1)\mod{U_n}$, as required. Now let $0\le j<i$. Suppose first
 that $j\ge q^n$ as well. For $i=t+q^n$ and $j=r+q^n$, we have $F(i,j)=
 x^{(t+q^n)q^n+r+q^n}+x^{t+q^n+(r+q^n)q^n}
 =x^{tq^n+q^{2n}+r+q^n}+x^{t+q^n+rq^n+q^{2n}}\cong
 x^{tq^n+1+r+q^n}+x^{t+q^n+rq^n+1}=F(t+1,r+1)$. By the induction
 hypothesis, since $i>t+1>r+1\ge 0$, there exist $i_1,j_1$ with
 $q^n>i_1\ge j_1\ge 0$ and $i_1+j_1>0$ such that $F(i,j)\cong
 F(i_1,j_1)\cong F(t+1,r+1)\mod{U_n}$, as required. Suppose now that
 $j<q^n$. As before, set $i=t+q^n$, and consider $F(i,j)$. We have
 $F(i,j)=x^{(t+q^n)q^n+j}+x^{t+q^n+jq^n}=x^{tq^n+q^{2n}+j}+x^{t+q^n+jq^n}\cong
 x^{tq^n+1+j}+x^{t+q^n(j+1)}=F(t,j+1)\mod{U_n}$. Since $i>t$, the result
 follows from the inductive hypothesis if $t\ge j+1$, or if $t<j+1<i$.
 Suppose that $t<j+1=i$.  Since $j<q^n$ and $i\ge q^n$, we must have
 $i=q^n$ and $j=q^n-1$. But then $t=0$, and $F(t,j+1)=F(0,q^n)=x^{q^n}+x^{q^{2n}}
 \cong x^{q^n}+x=F(0,1)\mod{U_n}$, which completes the proof of the inductive step.
 Thus $E_n\cup\set (x^{q^{2n}}-x)x^i\mid i\ge 0\endset$ 
 is a spanning set for $W_n$.
\end{proof}

 We remark that in the proof of Proposition \ref{proposition: wn/un basis}, it was established
 that
 $$
   E_n\cup \set (x^{q^{2n}}-x) x^i \mid i\ge0\endset
 $$
 is a linear basis for $W_n$.

\begin{corollary}
 $\dim(\ckzero{x}/W_n)=\choice q^n,2$. In particular, $W_n$ is a proper $T$-space of $\ckzero{x}$.
\end{corollary}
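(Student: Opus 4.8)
The plan is to obtain the codimension by a direct dimension count, since all of the structural work needed is already in place. I would begin by recording the chain of linear subspaces $U_n \subseteq W_n \subseteq \ckzero{x}$, available because the preceding lemma gives $U_n \subseteq W_n$ and Proposition~\ref{proposition: wn/un basis} exhibits $W_n = V_n \oplus U_n$. For such a chain the codimensions add (by the isomorphism $(\ckzero{x}/U_n)/(W_n/U_n) \cong \ckzero{x}/W_n$), so $\dim(\ckzero{x}/W_n) = \dim(\ckzero{x}/U_n) - \dim(W_n/U_n)$, and it remains only to identify the two terms on the right.

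The first term is already known: the discussion following Lemma~\ref{lemma: un basis} establishes that $\dim(\ckzero{x}/U_n) = q^{2n}-1$. For the second term, the decomposition $W_n = V_n \oplus U_n$ yields a linear isomorphism between $W_n/U_n$ and $V_n$, and by the remark following Lemma~\ref{lemma: fundamental set} (a consequence of the linear independence of $E_n$) we have $\dim V_n = \binom{q^n}{2} + q^n - 1$. Substituting, $\dim(\ckzero{x}/W_n) = (q^{2n}-1) - \bigl(\binom{q^n}{2} + q^n - 1\bigr) = q^{2n} - q^n - \binom{q^n}{2}$. Since $q^{2n}-q^n = 2\binom{q^n}{2}$, this collapses to $\binom{q^n}{2}$, as claimed.

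For the final assertion, I would note that $W_n$ is proper precisely when its codimension is positive. Since $q \ge 2$ and $n \ge 1$, we have $q^n \ge 2$, whence $\binom{q^n}{2} \ge 1 > 0$ and so $W_n \ne \ckzero{x}$. There is no genuine obstacle here: the entire content resides in the already-proven Proposition~\ref{proposition: wn/un basis} together with the two dimension formulas, so the only things to watch are that the quotient dimensions are subtracted in the correct direction along the inclusion chain and that the arithmetic telescopes as above.
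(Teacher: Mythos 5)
Your proof is correct and follows essentially the same route as the paper: both subtract $\dim V_n = \binom{q^n}{2}+q^n-1$ (via $W_n = V_n \oplus U_n$) from $\dim(\ckzero{x}/U_n) = q^{2n}-1$ and simplify, with your version merely making explicit the additivity of codimensions and the positivity argument for properness that the paper leaves implicit.
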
 

\begin{proof}
  The dimension of $\ckzero{x}/W_n$ is $q^{2n}-1 -(q^n(q^n-1)/2 +q^n-1)=q^{2n}/2-q^n/2=
  q^n(q^n-1)/2=\choice q^n,2$.
\end{proof}

\section{The maximality of $W_n$}

In this section, we begin to investigate the maximality of $W_n$ in $\ckzero{x}$ for $n\ge1$. 

We have seen that each integer $m$ with $1\le m\le q^{2n}-1$ is uniquely of the form $m=tq^n+r$
with $0\le t,r<q^n$ and $t+r>0$. Thus in the set $E_n\cup \set (x^{q^{2n}}-x) x^i \mid i\ge0\endset$, there are no
polynomials with degree of the form $jq^n+i$ with $q^n>i>j\ge0$. Consequently,
$$
 E_n\cup\set (x^{q^2n}-x)x^i\mid i\ge 0\endset\cup \set x^{i+q^nj}\mid q^n>i>j\ge 0\endset
$$
is linearly independent in $\ckzero{x}$, and contains polynomials of
each degree greater than or equal to 1, hence is a linear basis for $\ckzero{x}$. 
It follows that the set $\set x^{i+q^nj}\mid q^n>i>j\ge 0\endset$ containing $\choice q^n,2$ polynomials
is a $k$-linear basis for a subspace of $\ckzero{x}$ that is complentary to $W_n$.

\begin{definition}\label{definition: yn def}
 For each $n\ge 1$, let $\ybasis{n}=\set x^{i+q^nj}\mid q^n>i>j\ge 0\endset$, and let $Y_n$ denote the linear subspace of 
 $\ckzero{x}$ that is spanned by $\ybasis{n}$.
\end{definition}

Thus $\ckzero{x}=Y_n\oplus W_n=Y_n\oplus V_n\oplus U_n$. In order to establish that $W_n$ is maximal, it suffices to
show that for any nonzero $f\in Y_n$, $W_n+\set f\endset^S=\ckzero{x}$. Moreover, since each $q$-homogeneous component
of $f$ belongs to any $T$-space that contains $f$, it will suffice to prove that for any nonzero $q$-homogeneous polynomial 
$f\in Y_n$, $W_n+\set f\endset^S=\ckzero{x}$.

\begin{lemma}\label{lemma: basic for containment}
 For any positive integer $r$, the following hold in $\ckzero{x}$.
  \begin{list}{(\roman{parts})}{\usecounter{parts}} 
  \item $x^{q^{2^rm}}\cong x^{q^{2^r(m-2)}}\mod{U_{2^r}}$ for any $m\ge3$.
  \item  $x+(-1)^{m+1}x^{q^{2^rm}}\in \set x+x^{q^{2^r}}\endset^S$ for any $m\ge1$.
 \end{list}
\end{lemma}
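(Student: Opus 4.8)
The plan is to derive both statements from the single structural fact that the relevant $T$-space is closed under every substitution endomorphism $x\mapsto u$ of $\ckzero{x}$. Writing $s=2^r$ for brevity, so that $U_{2^r}=\set x-x^{q^{2s}}\endset^T$ and the generator of $\set x+x^{q^{s}}\endset^S$ is $x+x^{q^{s}}$, applying the endomorphism $x\mapsto u$ to these generators shows that, for every $u\in\ckzero{x}$, one has $u^{q^{2s}}\cong u\mod{U_{2^r}}$ (since $U_{2^r}$ is a $T$-ideal, hence a $T$-space) and $u+u^{q^{s}}\in\set x+x^{q^{s}}\endset^S$. These two ``for all $u$'' relations are the engine behind both parts.

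For part (i), I would simply specialize the first relation, taking $u=x^{q^{s(m-2)}}$, which is legitimate because $m\ge3>2$. Then $u^{q^{2s}}=x^{q^{s(m-2)}\,q^{2s}}=x^{q^{sm}}$, so $u^{q^{2s}}\cong u\mod{U_{2^r}}$ reads precisely $x^{q^{sm}}\cong x^{q^{s(m-2)}}\mod{U_{2^r}}$. No induction is needed here; in fact the identity holds for every $m\ge2$, the range $m\ge3$ being a subinstance.

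For part (ii), I would argue by induction on $m$ using the second relation. The base case $m=1$ is immediate, since $x+(-1)^{2}x^{q^{s}}=x+x^{q^{s}}$ is the generator and so lies in $\set x+x^{q^{s}}\endset^S$. For the inductive step, I feed $u=x^{q^{sm}}$ into $u+u^{q^{s}}\in\set x+x^{q^{s}}\endset^S$ to obtain $x^{q^{sm}}+x^{q^{s(m+1)}}\in\set x+x^{q^{s}}\endset^S$; combining this with the inductive hypothesis $x+(-1)^{m+1}x^{q^{sm}}\in\set x+x^{q^{s}}\endset^S$, i.e.\ $x\cong(-1)^{m}x^{q^{sm}}$ modulo the $T$-space, together with $x^{q^{sm}}\cong-x^{q^{s(m+1)}}$, yields $x\cong(-1)^{m+1}x^{q^{s(m+1)}}$, which rearranges to $x+(-1)^{m+2}x^{q^{s(m+1)}}\in\set x+x^{q^{s}}\endset^S$, exactly the assertion for $m+1$.

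The only place demanding genuine care is the sign bookkeeping in part (ii): each application of the relation $u+u^{q^s}$ introduces a factor of $-1$, and one must track the parity of $m$ so that the coefficient emerges as $(-1)^{m+1}$ rather than its negative. It is reassuring that in characteristic $2$ all signs collapse and the identity degenerates to the transparent telescoping $x+x^{q^{s(m+1)}}=(x+x^{q^{sm}})+(x^{q^{sm}}+x^{q^{s(m+1)}})$, which serves as a consistency check on the sign conventions.
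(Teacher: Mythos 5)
Your proof is correct and follows essentially the same route as the paper: part (i) by applying the substitution $x\mapsto x^{q^{2^r(m-2)}}$ to the generator of $U_{2^r}$ (the paper phrases this as an exponent computation $q^{2^rm}=q^{2^r(m-2)}q^{2^{r+1}}$ combined with the defining congruence), and part (ii) by induction on $m$, substituting $x\mapsto x^{q^{2^rm}}$ into $x+x^{q^{2^r}}$ and combining with the inductive hypothesis, with the same sign bookkeeping. Your congruence-style presentation and the paper's explicit linear-combination identity are equivalent.
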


\begin{proof}
 Let $m\ge 3$.
 We have $q^{2^rm}=q^{2^r(m-2)+2^{r+1}}=q^{2^r(m-2)}q^{2^{r+1}}$, and so
 $$
   x^{q^{2^rm}}=(x^{2^{r+1}})^{q^{2^r(m-2)}} \cong x^{q^{2^r(m-2)}}\mod{U_{2^r}},
 $$
 which establishes the first part. The second part is proven by induction on $m\ge1$, with
 the case for $m=1$ true by definition. Suppose that $m\ge1$ is an integer
 for which the result holds, so $x+(-1)^{m+1}x^{q^{2^rm}}\in \set x+x^{q^{2^r}}\endset^S$. Apply the
 substitution $x\mapsto x^{q^{2^rm}}$ to $x+x^{q^{2^r}}$ to obtain that $x^{q^{2^rm}}+x^{q^{2^rm}q^{2^r}}
 \in \set x+x^{q^{2^r}}\endset^S$. Thus $x+(-1)^{m+2}x^{q^{2^r(m+1)}}=
 x+(-1)^{m+1}x^{q^{2^rm}} + (-1)^{m+2}[x^{q^{2^rm}}+x^{q^{2^r(m+1)}}]
 \in \set x+x^{q^{2^r}}\endset^S$.  The result follows now by induction.
\end{proof}

\begin{proposition}
 For each $r\ge1$ and each odd $m\ge1$, $W_{2^rm}\subseteq W_{2^r}$.
\end{proposition}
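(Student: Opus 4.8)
The plan is to exploit the fact that, by definition, $W_{2^rm}$ is the $T$-space generated by the two polynomials $x+x^{q^{2^rm}}$ and $x^{q^{2^rm}+1}$. Since $W_{2^r}$ is itself a $T$-space, it suffices to show that each of these two generators already lies in $W_{2^r}$; the containment $W_{2^rm}\subseteq W_{2^r}$ then follows at once. Thus the whole argument reduces to two membership checks, both of which I would extract from Lemma~\ref{lemma: basic for containment}.

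For the generator $x+x^{q^{2^rm}}$, I would invoke Lemma~\ref{lemma: basic for containment}(ii) directly. Because $m$ is odd, we have $(-1)^{m+1}=1$, so that part of the lemma gives $x+x^{q^{2^rm}}\in\{x+x^{q^{2^r}}\}^S\subseteq W_{2^r}$, as needed. This is the first place the hypothesis enters: for even $m$ the lemma would instead place $x-x^{q^{2^rm}}$ in the $T$-space, which is not the generator we require.

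For the generator $x^{q^{2^rm}+1}$, I would work modulo $U_{2^r}$. Applying Lemma~\ref{lemma: basic for containment}(i) repeatedly produces the chain $x^{q^{2^rm}}\cong x^{q^{2^r(m-2)}}\cong\cdots\mod{U_{2^r}}$, and because $m$ is odd this descent by two's terminates at the exponent $q^{2^r}$ (rather than $q^{2\cdot 2^r}$), yielding $x^{q^{2^rm}}\cong x^{q^{2^r}}\mod{U_{2^r}}$. Multiplying through by $x$, which is legitimate since $U_{2^r}$ is a ($T$-)ideal, gives $x^{q^{2^rm}+1}\cong x^{q^{2^r}+1}\mod{U_{2^r}}$. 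As $x^{q^{2^r}+1}\in W_{2^r}$ by definition of $W_{2^r}$, and $U_{2^r}\subseteq W_{2^r}$ by the inclusion $U_n\subseteq W_n$ established earlier, I would conclude $x^{q^{2^rm}+1}\in W_{2^r}$.

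Combining the two memberships then establishes the proposition. I do not expect any serious computational obstacle here, since the real work was done in setting up Lemma~\ref{lemma: basic for containment}; the present statement is essentially a direct corollary of it. The only point demanding care — and the sole place the hypothesis is used — is the parity of $m$: oddness is precisely what makes the sign in part~(ii) trivial and forces the descent in part~(i) to bottom out at $q^{2^r}$ rather than at $q^{2\cdot 2^r}$. I would therefore flag the parity bookkeeping explicitly in the write-up to make clear why the argument fails for even $m$.
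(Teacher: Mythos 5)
Your proposal is correct and follows essentially the same route as the paper: both arguments reduce the containment to the two generators, using Lemma~\ref{lemma: basic for containment}(i) iterated over the odd descent $m, m-2,\ldots,1$ (together with $U_{2^r}\subseteq W_{2^r}$) to handle $x^{q^{2^rm}+1}$, and Lemma~\ref{lemma: basic for containment}(ii) with $(-1)^{m+1}=1$ to handle $x+x^{q^{2^rm}}$. Your explicit remarks on multiplying the congruence by $x$ (valid since $U_{2^r}$ is a $T$-ideal) and on where oddness is used are accurate and match the paper's reasoning.
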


\begin{proof}
 By Lemma \ref{lemma: basic for containment} (i) and induction on odd
 $m\ge1$, $x^{q^{2^rm}}\cong x^{q^{2^r}}\mod{U_{2^r}}$  for every odd
 $m\ge1$. Let $m\ge1$ be odd. Then $x^{q^{2^rm}+1}\cong
 x^{q^{2^r}+1}\mod{U_{2^r}}$. Since $x^{q^{2^r}+1}\in W_{2^r}$ and
 $U_{2^r}\subseteq W_{2^r}$, it follows that $x^{q^{2^rm}+1}\in
 W_{2^r}$. Next,  since $m$ is odd, it follows from Lemma \ref{lemma:
 basic for containment} (ii) that  $x+x^{q^{2^rm}}\in \set
 x+x^{q^{2^r}}\endset^S\subseteq W_{2^r}$. Thus $W_{2^rm}=\set
 x+x^{q^{2^rm}}\endset^S+\set x^{q^{2^rm}+1}\endset^S
 \subseteq \set x+x^{q^{2^r}}\endset^S+\set x^{q^{2^r}+1}\endset^S=W_{2^r}$.
\end{proof}

The next question is whether or not $W_{2^s}\subseteq W_{2^r}$ when
$s\ge r$. It follows from the next result that this is never the case.

\begin{proposition}
  Let $s>r\ge 0$ be integers. Then $W_{2^r}+W_{2^s}=\ckzero{x}$.
\end{proposition}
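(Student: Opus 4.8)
The plan is to quotient by $W_{2^r}$ and to show that $W_{2^s}$ already surjects onto $\ckzero{x}/W_{2^r}$. Writing $n=2^r$, the decomposition $\ckzero{x}=Y_n\oplus W_n$ recorded just before Definition~\ref{definition: yn def} exhibits the classes of the monomials in $\ybasis{n}=\set x^{i+q^nj}\mid q^n>i>j\ge0\endset$ as a $k$-basis of $\ckzero{x}/W_n$. Consequently $W_{2^r}+W_{2^s}=\ckzero{x}$ will follow once I produce, for each admissible pair $(i,j)$, an element of $W_{2^s}$ whose class modulo $W_{2^r}$ is a \emph{nonzero} scalar multiple of $x^{i+q^nj}$: these multiples then span $\ckzero{x}/W_n$, so every element of $\ckzero{x}$ differs from an element of $W_{2^s}$ by an element of $W_{2^r}$.

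The arithmetic heart of the argument is the observation that, because $s>r$, the exponent $2^{s-r}$ is even, and hence $q^{2^s}=\bigl(q^{2n}\bigr)^{2^{s-r-1}}\cong 1\mod{q^{2n}-1}$. By Lemma~\ref{lemma: almost like 1}, congruence modulo $U_n$ amounts to reducing exponents modulo $q^{2n}-1$; combined with this congruence it gives $x^{q^{2^s}t}\cong x^t\mod{U_n}$ for every $t\ge1$, and since $U_n\subseteq W_n$ the same holds modulo $W_n$. Applying the endomorphism $x\mapsto x^t$ to the two generators of $W_{2^s}$ therefore yields the elements $x^t+x^{q^{2^s}t}$ and $(x^{q^{2^s}+1})^t$ of $W_{2^s}$, which are congruent modulo $W_n$ to $2x^t$ and to $x^{2t}$ respectively (exponents read modulo $q^{2n}-1$).

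From here the argument splits on the characteristic. If $p$ is odd, I would take $t=i+q^nj$ with $q^n>i>j\ge0$; then $x^t$ is already one of the basis monomials above, and $x^t+x^{q^{2^s}t}\cong 2x^t\mod{W_n}$ with $2\ne0$, so as $(i,j)$ varies these elements of $W_{2^s}$ span $\ckzero{x}/W_n$. If $p=2$ this device collapses, since $2x^t=0$; instead I would use $(x^{q^{2^s}+1})^t\cong x^{2t}\mod{W_n}$. Because $q$ is a power of $2$ in this case, $q^{2n}-1$ is odd, so $2$ is invertible modulo $q^{2n}-1$, and for each target $i+q^nj$ one can solve $2t\cong i+q^nj\mod{q^{2n}-1}$; the corresponding $(x^{q^{2^s}+1})^t$ then reduces to $x^{i+q^nj}$, hitting every basis monomial. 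In either case the image of $W_{2^s}$ exhausts $\ckzero{x}/W_n$, which gives $W_{2^r}+W_{2^s}=\ckzero{x}$.

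The step I expect to be the genuine obstacle is precisely this case division: the two natural generators of $W_{2^s}$ behave in complementary ways, and neither alone suffices in both characteristics. In characteristic $2$ the additive generator degenerates ($2x^t=0$), while in odd characteristic the multiplicative generator reduces to the even exponent $2t$, which — since $q^{2n}-1$ is then even — cannot reach the odd-degree basis monomials. Recognizing that each characteristic forces the use of the other generator, and verifying that in characteristic $2$ the integer $2$ is a unit modulo $q^{2n}-1$, is the one non-formal point; everything else is bookkeeping with the exponent reduction supplied by Lemma~\ref{lemma: almost like 1} together with the key congruence $q^{2^s}\cong1\mod{q^{2n}-1}$.
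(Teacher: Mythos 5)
Your proof is correct, but it runs a genuinely different endgame from the paper's. Both arguments turn on the same arithmetic engine: since $s>r$, one has $q^{2^s}\cong 1\mod{q^{2\cdot 2^r}-1}$, hence $x^{q^{2^s}}\cong x\mod{U_{2^r}}$ (the paper extracts this from Lemma \ref{lemma: basic for containment} (i), you from Lemma \ref{lemma: almost like 1}), and both split on the characteristic. The difference is what is done with it. The paper aims only at producing the single element $x$ in $V=W_{2^r}+W_{2^s}$: it reduces the multiplicative generator of $W_{2^s}$ to get $x^2\in V$, extracts $x^q\in V$ (by expanding $(x+x^{q-1})^2$ when $p>2$, and by the substitution $x\mapsto x^{q/2}$ when $p=2$), substitutes again to get $x^{q^{2^r}}\in V$, and finishes with the additive generator $x+x^{q^{2^r}}$ of $W_{2^r}$; since a $T$-space containing $x$ is all of $\ckzero{x}$, nothing more is needed, and in particular no structural information about $W_{2^r}$ beyond its two generators and $U_{2^r}\subseteq W_{2^r}$ is used. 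You instead invoke the heavier decomposition $\ckzero{x}=Y_n\oplus W_n$ (with $n=2^r$) from the start of Section 3 and hit every basis monomial of the complement $Y_n$, using the additive generator of $W_{2^s}$ when $p$ is odd and the multiplicative one, together with the invertibility of $2$ modulo the odd number $q^{2n}-1$, when $p=2$. What your route buys is an explicit family of elements of $W_{2^s}$ whose classes span $\ckzero{x}/W_{2^r}$; what it costs is the dependence on Proposition \ref{proposition: wn/un basis} and the basis $\ybasis{n}$, which the paper's proof avoids entirely. Note also that your own mechanism admits the paper's shortcut: a single exponent $t$ already lands on $x$ modulo $U_{2^r}$ --- take $t=1$ in odd characteristic (giving $2x$), and $t=q^{2n}/2$ in characteristic $2$ (giving $x^{q^{2n}}\cong x$) --- after which $x\in V$ and the fact that $V$ is a $T$-space finish the proof with no appeal to the basis of $Y_n$.
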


\begin{proof}
 Let $V=W_{2^r}+W_{2^s}$. By Lemma \ref{lemma: basic for containment}
 (i), if we let $s=r+t$ with $t\ge 1$, we have
 $x^{q^{2^s}}=x^{q^{2^r2^t}}\cong x^{q^{2^r2}}=x^{q^{2^{r+1}}}\cong
 x\mod{U_{2^r}}$, and so $x^{q^{2^s}+1}\cong x^2\mod{U_{2^r}}$. But then
 $x^2\in V$. Consider first the case when $p>2$. From
 $(x+x^{q-1})^2-x^2-(x^{q-1})^2\in V$, we obtain that $2x^q\in V$ and
 since $p>2$, we obtain $x^q\in V$. On the other hand, when $p=2$, we
 observe that since $q=2^t$ for some $t\ge1$, we again obtain that
 $x^{q}\in V$. So in either case, $x^q\in V$, and thus $x^{q^{2^r}}\in
 V$. Since $x+x^{q^{2^r}}\in V$, we finally obtain $x\in V$, as
 required.
\end{proof}

Since for any $n\ge1$, $W_n$ is a proper subspace of $\ckzero{x}$, it
follows immediately that for any $r,s\ge1$ with $r\ne s$, neither of
$W_{2^r}$ and $W_{2^s}$ contains the other, and more generally, no
maximal $T$-space of $\ckzero{x}$ contains both $W_{2^r}$ and $W_{2^s}$.

From here on, $n$ shall denote a power of 2. We wish to show that for
any nonzero $f\in Y_n$, $W_n+\set f\endset^S=\ckzero{x}$.  In fact, it
suffices to consider only linear combinations of $q$-homogeneous
elements of $\ybasis{n}$; that is, it suffices to prove that if $f$ is
any nonzero $q$-homogeneous element of $Y_n$, then $W_1+\set
f\endset^S=\ckzero{x}$. 

\section{The maximality of $W_1$ in $\ckzero{x}$}

 Our objective in this section is to establish that $W_1$ is a maximal 
 $T$-space of $\ckzero{x}$. 
 
 Suppose that $X$ and $Y$ are nonempty sets with $X\subseteq Y$. We
 shall have occasion to compare the $T$-space of $\kzero{X}$
 (respectively $\ckzero{X}$) that is generated by a subset $U$ of
 $\kzero{X}$ $(\ckzero{X}$) to the $T$-space of $\kzero{Y}$
 ($\ckzero{Y}$) that is generated by the same set $U$. When necessary
 for clarity, for $U\subseteq \kzero{X}$ ($\ckzero{X}$), we shall write
 $U^S_X$, rather than $U^S$, to denote the $T$-space of $\kzero{X}$
 ($\ckzero{X}$) that is generated by $U$. Accordingly,  $U^S_Y$ would
 denote the $T$-space of $\kzero{Y}$ ($\ckzero{Y}$) generated by $U$.
 
 \begin{proposition}\label{proposition: extending t space}
  Let $X$ and $Y$ be nonempty sets with $X\subseteq Y$.
  \begin{list}{(\roman{parts})}{\usecounter{parts}} 
   \item For any $U\subseteq \kzero{X}$, $U^S_X=U^S_Y\cap \kzero{X}$.
   \item For any $U\subseteq \ckzero{X}$, $U^S_X=U^S_Y\cap \ckzero{X}$.
  \end{list}
 \end{proposition}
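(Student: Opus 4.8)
The plan is to reduce both parts to the concrete description of a $T$-space as a span of endomorphic images and then to exploit the freeness of the algebras involved. For any of the relevant algebras $A$ I would first record that $U^S$ is precisely the $k$-linear span of $\set \phi(u)\mid \phi\in\operatorname{End}(A),\ u\in U\endset$: this set contains $U$ (take $\phi$ to be the identity), its linear span is carried into itself by every $\psi\in\operatorname{End}(A)$ because $\psi(\phi(u))=(\psi\comp\phi)(u)$, and conversely any $T$-space containing $U$ must contain every $\phi(u)$ and hence their span. Since $\kzero{X}$ (respectively $\ckzero{X}$) is free, an endomorphism is the same thing as an arbitrary set map from the generating set into the algebra, and the inclusion $X\subseteq Y$ identifies $\kzero{X}$ with the subalgebra of $\kzero{Y}$ generated by $X$. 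I will carry out the argument for~(i); the argument for~(ii) is word-for-word the same in the category of commutative non-unital algebras.

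For the inclusion $U^S_X\subseteq U^S_Y\cap\kzero{X}$, I would begin with any $\phi\in\operatorname{End}(\kzero{X})$ and extend it to an endomorphism $\widetilde\phi\in\operatorname{End}(\kzero{Y})$ by setting $\widetilde\phi(x)=\phi(x)$ for $x\in X$ and, say, $\widetilde\phi(y)=0$ for $y\in Y\setminus X$; freeness of $\kzero{Y}$ guarantees that such a $\widetilde\phi$ exists. Since $\widetilde\phi$ and $\phi$ agree on the generators of $\kzero{X}$, they agree on all of $\kzero{X}$, so $\phi(u)=\widetilde\phi(u)\in U^S_Y$ for every $u\in U$. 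Each such $\phi(u)$ also lies in $\kzero{X}$, and as $U^S_Y\cap\kzero{X}$ is a subspace, the span $U^S_X$ of these elements is contained in $U^S_Y\cap\kzero{X}$.

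The substantive direction is $U^S_Y\cap\kzero{X}\subseteq U^S_X$, and the key device is a retraction. Let $\rho\in\operatorname{End}(\kzero{Y})$ be the endomorphism determined by $\rho(x)=x$ for $x\in X$ and $\rho(y)=0$ for $y\in Y\setminus X$. Then $\rho$ is idempotent, its image is exactly $\kzero{X}$, and, being a homomorphism that fixes the generators $X$, it fixes $\kzero{X}$ pointwise. Now take $f\in U^S_Y\cap\kzero{X}$ and write $f=\sum_i\alpha_i\,\psi_i(u_i)$ with $\psi_i\in\operatorname{End}(\kzero{Y})$, $u_i\in U$, and $\alpha_i\in k$. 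Applying $\rho$ and using $\rho(f)=f$ gives $f=\sum_i\alpha_i\,\rho(\psi_i(u_i))$. The point is that $\rho\comp\psi_i$ maps $\kzero{Y}$ into $\kzero{X}$, so its restriction $\theta_i$ to the subalgebra $\kzero{X}$ is a bona fide element of $\operatorname{End}(\kzero{X})$; since $u_i\in\kzero{X}$ we get $\rho(\psi_i(u_i))=\theta_i(u_i)\in U^S_X$. Hence $f$ is a $k$-linear combination of elements of $U^S_X$, so $f\in U^S_X$, as required.

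The one place that needs care — and the step I would flag as the main obstacle — is the verification that $\rho\comp\psi_i$ genuinely restricts to an endomorphism $\theta_i$ of the smaller algebra, rather than to a mere linear map; this is exactly where the universal property of the free algebra is used, both to construct $\rho$ in the first place and to recognise the restriction $\theta_i$ as an honest endomorphism whose values on $U$ therefore lie in $U^S_X$. Everything else is formal bookkeeping. Finally, part~(ii) follows by repeating the identical argument with $\ckzero{X}$ and $\ckzero{Y}$ in place of $\kzero{X}$ and $\kzero{Y}$, since the free commutative non-unital algebra enjoys the same universal property and admits the same retraction $\rho$.
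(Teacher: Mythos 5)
Your proof is correct and follows essentially the same route as the paper's: the easy inclusion via extending endomorphisms of $\kzero{X}$ to $\kzero{Y}$, and the reverse inclusion via the retraction $\rho$ (the paper's map $g$, with $x\mapsto x$ for $x\in X$ and $y\mapsto 0$ for $y\in Y-X$), noting that $\rho\comp\psi_i$ restricted to $\kzero{X}$ --- the paper's $g\comp f_i\comp\iota$ --- is an endomorphism of $\kzero{X}$. The only cosmetic difference is that you make explicit the description of $U^S$ as the linear span of endomorphic images of $U$, which the paper uses implicitly.
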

 
 \begin{proof}
  We shall prove the first part; the proof of the second is similar and will be omitted.
  Since every algebra endomorphism of $\kzero{X}$ extends to an algebra endomorphism of 
  $\kzero{Y}$, it follows that $U^S_X\subseteq U^S_Y$, and thus $U^S_X\subseteq U^S_Y
  \cap \kzero{X}$. It remains to prove that $U^S_Y\cap \kzero{X}\subseteq U^S_X$. 
  Let $u\in U^S_Y\cap \kzero{X}$. Then there exist $\alpha_i\in k$, $f_i\from \kzero{Y}
  \to \kzero{Y}$, $u_i\in U$, with $u=\sum \alpha_if_i(u_i)$.
  Let $g\from \kzero{Y}\to \kzero{Y}$ be the map determined by $x\mapsto
  x$ if $x\in X$, while $x\mapsto 0$ if $x\in Y-X$. As well, let
  $\iota\from \kzero{X}\to \kzero{Y}$ be  the map determined by
  $\iota(x)=x$ for each $x\in X$. Then since $u\in \kzero{X}$, we have 
  $u=g(u)=\sum \alpha_i g\comp f_i(u_i)$, and since $u_i\in U$, we have
  $u_i=\iota(u_i)$, so $u=\sum \alpha_i g\comp f_i\comp \iota(u_i)$.
  Since $g\comp f_i\comp\iota\from \kzero{X}\to\kzero{X}$, and $u_i\in U$ 
  for each $i$, it follows that $u\in U$.
 \end{proof} 

For $x\in X$, we shall make use of the homomorphism $\pi\from
\kzero{X}\to\ckzero{x}$ that is determined by sending each $z\in X$ to
$x$. For each $T$-space $V$ of $\kzero{X}$, $\pi{V}=V\cap \ckzero{x}$,
where we regard $\kzero{X}$ as a subalgebra of $\kzero{X}$ in the
natural way. This follows from the fact that $V$ is a $T$-space, and we
can consider $\pi$ as an endomorphism of $\kzero{X}$, so
$\pi(V)\subseteq V$. Thus $\pi(V)\subseteq V\cap\ckzero{x}$. For $f\in
V\cap \ckzero{x}$, $\pi(f)=f$ and so $f\in \pi(V)$, which proves that
$V\cap \ckzero{x}\subseteq \pi(V)$.

\begin{lemma}\label{lemma: move to kxy}
 Let $X$ be any set of size at least two, and let $x\in X$. For any $U\subseteq\ckzero{x}$ and $f\in \ckzero{x}$, 
 $f\in U^S$ if and only if $f\in U^{S_X}+T^{(2)}$, where $T^{(2)}$ is the commutator $T$-ideal of $\kzero{X}$ 
 (so generated by $[x,y]$ for any $y\in X$).
\end{lemma}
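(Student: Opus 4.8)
The plan is to prove the two implications separately, using the collapsing homomorphism $\pi\from\kzero{X}\to\ckzero{x}$ introduced just above, which sends every generator in $X$ to $x$. The forward implication will be routine, while essentially all of the content lies in the reverse one.

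For the forward direction I would prove the stronger statement $U^S\subseteq U^{S_X}$. Any endomorphism $\theta$ of $\ckzero{x}$ is determined by $\theta(x)\in\ckzero{x}\subseteq\kzero{X}$, and hence extends to an endomorphism of $\kzero{X}$ sending $x\mapsto\theta(x)$ and every other generator to $0$; since each $u\in U$ involves only $x$, this extension agrees with $\theta$ on $u$. Thus every generator $\theta(u)$ of $U^S$ already lies in $U^{S_X}$, and so $f\in U^S$ immediately gives $f\in U^{S_X}\subseteq U^{S_X}+T^{(2)}$.

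For the reverse direction I would first record that $T^{(2)}\subseteq\ker\pi$. Since $\ckzero{x}$ is commutative, for every endomorphism $\psi$ of $\kzero{X}$ we have $\pi(\psi([x,y]))=[(\pi\comp\psi)(x),(\pi\comp\psi)(y)]=0$; as $\ker\pi$ is a two-sided ideal containing all these endomorphic images of $[x,y]$, and $T^{(2)}$ is by definition the ideal they generate, the inclusion follows. Now suppose $f\in U^{S_X}+T^{(2)}$ with $f\in\ckzero{x}$, and write $f=h+c$ with $h\in U^{S_X}$ and $c\in T^{(2)}$. Applying $\pi$ and using that $\pi$ fixes $\ckzero{x}$ pointwise (so $\pi(f)=f$) together with $\pi(c)=0$, we obtain $f=\pi(h)$.

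It then remains to verify that $\pi(h)\in U^S$ whenever $h\in U^{S_X}$. Writing $h=\sum_i\alpha_i\psi_i(u_i)$ with $u_i\in U$ and each $\psi_i$ an endomorphism of $\kzero{X}$, the crucial observation is that the restriction of $\pi\comp\psi_i$ to $\ckzero{x}$ is an endomorphism of $\ckzero{x}$, since it is a homomorphism carrying $\ckzero{x}$ into $\ckzero{x}=\pi(\kzero{X})$. As $u_i\in\ckzero{x}$, this yields $(\pi\comp\psi_i)(u_i)\in U^S$, and therefore $\pi(h)=\sum_i\alpha_i(\pi\comp\psi_i)(u_i)\in U^S$, giving $f\in U^S$. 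I expect no serious obstacle: the only step requiring a moment's care is the inclusion $T^{(2)}\subseteq\ker\pi$, and the conceptual heart of the argument is the remark that post-composing an arbitrary endomorphism of $\kzero{X}$ with $\pi$ and restricting to $\ckzero{x}$ produces a genuine endomorphism of $\ckzero{x}$, which is exactly what converts many-variable $T$-space membership into one-variable membership.
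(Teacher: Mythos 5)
Your proof is correct and follows essentially the same route as the paper's: both directions come down to applying the retraction $\pi$, the inclusion $T^{(2)}\subseteq\ker(\pi)$, and the fact that composing endomorphisms of $\kzero{X}$ with $\pi$ (respectively, extending endomorphisms of $\ckzero{x}$ to $\kzero{X}$) converts $T$-space membership between the two algebras. The only difference is packaging: you prove these conversion facts inline, whereas the paper obtains them by citing Proposition \ref{proposition: extending t space} together with the observation preceding the lemma that $\pi(V)=V\cap\ckzero{x}$ for every $T$-space $V$ of $\kzero{X}$.
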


\begin{proof}
 Since $T^{(2)}\subseteq \ker(\pi)$, we have $\pi (U^{S_X}+T^{(2)})=\pi(U^{S_X})=U^{S_X}\cap \ckzero{x}$, and
 by Proposition \ref{proposition: extending t space}, $U^{S_X}\cap \ckzero{x}=U^S$. For $f\in \ckzero{x}$, we have
 $\pi(f)=f$, so $f\in U^{S_X}+T^{(2)}$ implies that $f=\pi(f)\in U^S$, while the converse follows from the fact
 that $U^S\subseteq U^{S_X}\subseteq U^{S_X}+T^{(2)}$.
\end{proof} 

\begin{corollary}\label{corollary: commutative in kxy}
 Let $X$ be any set of size at least two, and let $x\in X$. For any $U\subseteq\ckzero{x}$ and $f\in \ckzero{x}$, 
 $f\in U^S$ if and only if $f\in U^{S_X}$ in $\ckzero{X}$. 
\end{corollary}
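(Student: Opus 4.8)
The plan is to read Lemma~\ref{lemma: move to kxy} through the quotient map that collapses $\kzero{X}$ onto its commutative image. Since $T^{(2)}$ is the commutator $T$-ideal of $\kzero{X}$, the quotient $\kzero{X}/T^{(2)}$ is exactly the free commutative algebra $\ckzero{X}$; let $\rho\from\kzero{X}\to\ckzero{X}$ denote the canonical projection, so that $\ker\rho=T^{(2)}$.

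The one substantive point I would establish is that $\rho$ carries the $T$-space of $\kzero{X}$ generated by $U$ onto the $T$-space of $\ckzero{X}$ generated by $U$; that is, $\rho(U^{S_X})=U^{S_X}$ computed in $\ckzero{X}$. Because $T^{(2)}$ is a $T$-ideal, every endomorphism of $\kzero{X}$ preserves it and hence descends to an endomorphism of $\ckzero{X}$, giving $\rho(U^{S_X})\subseteq U^{S_X}$ in $\ckzero{X}$; conversely, every endomorphism of $\ckzero{X}$ lifts to one of $\kzero{X}$ by choosing preimages of the images of the generators, giving the reverse inclusion. Here I would also note that since the elements of $U$ lie in $\ckzero{x}$ and so involve only the single variable $x$, they are unchanged by commutativization, so $\rho$ restricts to the identity on $\ckzero{x}$ and in particular $\rho(U)=U$ and $\rho(f)=f$ for the given $f$.

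With this in hand the corollary is formal. For the given $f\in\ckzero{x}$, membership $f\in U^{S_X}$ in $\ckzero{X}$ is equivalent to the existence of $g\in U^{S_X}$ in $\kzero{X}$ with $\rho(g)=f$, which, since $g-f\in\ker\rho=T^{(2)}$, is in turn equivalent to $f\in U^{S_X}+T^{(2)}$ in $\kzero{X}$. Combining this equivalence with Lemma~\ref{lemma: move to kxy}, which states that $f\in U^S$ if and only if $f\in U^{S_X}+T^{(2)}$, gives $f\in U^S$ if and only if $f\in U^{S_X}$ in $\ckzero{X}$, as required. The only step needing real care is the correspondence $\rho(U^{S_X})=U^{S_X}$ of the middle paragraph, resting on the twin facts that endomorphisms of $\kzero{X}$ descend through $\rho$ and that endomorphisms of $\ckzero{X}$ lift along it; the remainder is bookkeeping with $\ker\rho=T^{(2)}$.
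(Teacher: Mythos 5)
Your proof is correct and fills in exactly the derivation the paper leaves implicit: the corollary is stated there without proof as an immediate consequence of Lemma \ref{lemma: move to kxy}, the bridge being the identification $\ckzero{X}\cong\kzero{X}/T^{(2)}$ together with the fact that the abelianization $\rho$ carries the $T$-space of $\kzero{X}$ generated by $U$ onto the $T$-space of $\ckzero{X}$ generated by $U$, which you justify correctly by descending endomorphisms along $\rho$ (using that $T^{(2)}$ is a $T$-ideal) and lifting endomorphisms of $\ckzero{X}$ through the surjection. The remaining bookkeeping with $\ker\rho=T^{(2)}$ and $\rho$ fixing $\ckzero{x}$ is handled properly, so nothing is missing.
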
 
 
The following result will be very important in our work.

\begin{proposition}[\cite{fine}, Theorem 1]\label{proposition: fines result}
 Let $p$ be a prime, and let 
 \begin{alignat*}{2}
  M&=M_0+M_1p+M_2p^2+\cdots+M_tp^t &&\qquad(0\le M_r<p),\\
  N&=N_0+N_1p+N_2p^2+\cdots+N_tp^t &&\qquad(0\le N_r<p).
 \end{alignat*}
 Then
 $$
 \choice M,N \cong \choice M_0,{N_0}\choice M_1,{N_1}\choice M_2,{N_2}\cdots \choice M_t,{N_t}\mod{p}.
 $$
\end{proposition}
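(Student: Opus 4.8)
The plan is to prove this classical binomial congruence (Lucas' theorem) by a generating-function computation in the polynomial ring $k[X]$, comparing two expansions of $(1+X)^M$ and reading off the coefficient of $X^N$. Since the characteristic of $k$ is $p$, the congruences modulo $p$ in the statement may be interpreted as equalities in $k$, so it suffices to establish the corresponding polynomial identity in $k[X]$.

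First I would record the Frobenius identity: for every $r\ge 0$,
$$
 (1+X)^{p^r}\cong 1+X^{p^r}\mod{p}.
$$
For $r=0$ this is trivial, and for $r=1$ it follows from $\choice p,j\cong 0\mod{p}$ for $0<j<p$, which holds because $p$ divides the numerator of $\choice p,j=p!/(j!(p-j)!)$ but not its denominator. The general case follows by induction on $r$: raising the identity for $r$ to the $p$-th power and then applying the case $r=1$ with $X$ replaced by $X^{p^r}$ gives the identity for $r+1$.

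Next, using the base-$p$ expansion $M=\sum_{r=0}^t M_rp^r$, I would factor and reduce:
$$
 (1+X)^M=\prod_{r=0}^t\bigl((1+X)^{p^r}\bigr)^{M_r}\cong\prod_{r=0}^t\bigl(1+X^{p^r}\bigr)^{M_r}\mod{p}.
$$
Expanding each factor on the right by the binomial theorem as $\sum_{j_r=0}^{M_r}\choice M_r,{j_r}X^{j_rp^r}$ and multiplying out, the coefficient of $X^N$ on the right is the sum of $\prod_{r=0}^t\choice M_r,{j_r}$ taken over all tuples $(j_0,\dots,j_t)$ with $0\le j_r\le M_r$ and $\sum_{r=0}^t j_rp^r=N$, while the coefficient of $X^N$ on the left is plainly $\choice M,N$.

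The crux of the argument is the coefficient-matching step. Since each admissible $j_r$ satisfies $0\le j_r\le M_r<p$, the equation $\sum_r j_rp^r=N$ exhibits $(j_0,\dots,j_t)$ as the sequence of base-$p$ digits of $N$; by uniqueness of base-$p$ representations, the only candidate tuple is $j_r=N_r$ for all $r$. If $N_r\le M_r$ for every $r$ this tuple is admissible and is the unique contributor, so the right-hand coefficient equals $\prod_{r=0}^t\choice M_r,{N_r}$, and comparison with the left-hand coefficient $\choice M,N$ gives the claim. The one case requiring a word of care is when $N_r>M_r$ for some $r$: then no admissible tuple exists, so the right-hand coefficient is the empty sum $0$, in agreement with the convention that $\choice M_r,{N_r}=0$ in that event, and here the argument correctly yields $\choice M,N\cong 0\mod{p}$.
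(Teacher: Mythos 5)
Your proof is correct. There is, however, nothing in the paper to compare it against: the paper does not prove this proposition but quotes it, with attribution, as Theorem 1 of the cited paper of Fine (the result is Lucas' theorem), so it is imported as a known fact. Your argument --- the Frobenius identity $(1+X)^{p^r}\equiv 1+X^{p^r} \pmod p$ established by induction, the factorization $(1+X)^M\equiv\prod_r (1+X^{p^r})^{M_r}$ along the base-$p$ digits of $M$, and the matching of the coefficient of $X^N$ via uniqueness of base-$p$ representations, with the degenerate case $N_r>M_r$ correctly handled by the convention $\binom{M_r}{N_r}=0$ --- is the standard generating-function proof of that theorem, and is essentially the argument of the cited reference, so you have in effect supplied the omitted proof rather than found a new route. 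One stylistic remark: the detour through the coefficient field $k$ is unnecessary, since the statement concerns only integers modulo $p$; one can work directly in $(\mathbb{Z}/p\mathbb{Z})[X]$, which changes nothing of substance.
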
 

We state an immediate consequence of Proposition \ref{proposition: fines result} which will be of great value 
in what follows. Recall that $k$ is a finite field of order $q$ and characteristic $p$, so
$q$ is a $p$-power.

\begin{corollary}\label{corollary: fines application}
 For any integers $t,r,i,j$ with $0\le t,r,i,j<q$,
 $$
  \choice tq+r,{jq+i}\cong \choice t,j\choice r,i\mod{p}.
 $$
\end{corollary}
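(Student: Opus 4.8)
The plan is to read off Corollary~\ref{corollary: fines application} as a direct specialization of Fine's theorem (Proposition~\ref{proposition: fines result}), the only real work being the bookkeeping of base-$p$ digits. Since $q$ is a power of the characteristic $p$, I would write $q=p^s$ with $s\ge1$. Each of the four integers $t,r,i,j$ lies in the range $[0,q-1]=[0,p^s-1]$, so each is described by exactly $s$ base-$p$ digits (padding with leading zeros where necessary): say $r=\sum_{l=0}^{s-1}r_lp^l$, and analogously for $i$, $t$, and $j$.

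The key point is that forming $tq+r=t\,p^s+r$ introduces no carries. Because $r<p^s$, the digits $r_0,\dots,r_{s-1}$ occupy positions $0$ through $s-1$, while the digits of $t$ are shifted up by $s$ and occupy positions $s$ through $2s-1$; the two blocks do not overlap, so the base-$p$ expansion of $tq+r$ is precisely the concatenation of the digit string of $t$ with that of $r$. The identical reasoning applies to $jq+i$. Writing $M=tq+r$ and $N=jq+i$ in base $p$ as in Proposition~\ref{proposition: fines result}, the digits of $M$ in positions $0,\dots,s-1$ are those of $r$ and in positions $s,\dots,2s-1$ are those of $t$, and similarly for $N$ with $i$ and $j$.

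Now I would apply Proposition~\ref{proposition: fines result} to $M$ and $N$ to obtain that $\binom{M}{N}$ is congruent modulo $p$ to the product of the digitwise binomial coefficients across all $2s$ positions. Splitting this product into the low block (positions $0,\dots,s-1$) and the high block (positions $s,\dots,2s-1$), the low block is $\prod_{l=0}^{s-1}\binom{r_l}{i_l}$, which is exactly the product furnished by Fine's theorem applied to $r$ and $i$, hence congruent to $\binom{r}{i}$; the high block is the corresponding product for $t$ and $j$, hence congruent to $\binom{t}{j}$. Multiplying gives $\binom{tq+r}{jq+i}\cong\binom{t}{j}\binom{r}{i}\mod{p}$, as claimed. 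There is no genuine obstacle here; the one step demanding care is the no-carry claim, which is precisely what the strict bounds $r,i<q$ and $t,j<q$ guarantee, and it is this that lets the single invocation of Fine's theorem factor cleanly into the two smaller binomial coefficients.
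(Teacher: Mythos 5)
Your proof is correct and is precisely the argument the paper intends: the paper states this corollary as an immediate consequence of Fine's theorem (Proposition~\ref{proposition: fines result}) and omits the details, which are exactly the no-carry digit-concatenation observation you spell out. Nothing is missing.
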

  
\begin{corollary}\label{corollary: special fines result}
 Let $j$ and $t$ be integers with $0\le j\le t$. Then the following hold:
 
  \begin{list}{(\roman{parts})}{\usecounter{parts}} 
  \item If $1\le r\le q-1$ and $t\le r/2$, then modulo $p$,
  $$\choice r+t(q-1),{1+j(q-1)} \cong \begin{cases} 0 & j>1\\
  t   & j=1\\
  r-t & j=0\end{cases}
  $$
  \item If $1\le r\le q-1$ and $r+1\le t<(q+r+1)/2$, then modulo $p$,
  $$choice r+t(q-1),{1+j(q-1)} \cong \begin{cases}  \choice t-1,{j-1} \choice q+r-t,{q+1-j}& j>1\\
  t-1&j=1\\
  r-t&j=0\end{cases}
  $$
  In particular, if $1< j < t-(r-1)$, then $\choice r+t(q-1),{1+j(q-1)} \cong 0\mod{p}$.
  \item If $2\le r\le q-1$ and $t<r/2$, then modulo $p$,
  $$\choice r+t(q-1),{r-1+j(q-1)} \cong \begin{cases} 0 & j< t-1\\
  t   & j=t-1\\
  r-t & j=t\end{cases}
  $$
  \item If $2\le r\le q-1$ and $r+1\le t<(q+r+1)/2$, then modulo $p$,
  $$\choice r+t(q-1),{r-1+j(q-1)} \cong \begin{cases}
  \choice t-1,{j}\choice q+r-t,{r-1-j}& j\le r-1<t-1\\
  0 & r-1<j< t-1\\
  t-1 & j=t-1\\
  r-t & j=t\end{cases}
  $$
  \end{list}
\end{corollary}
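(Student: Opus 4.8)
The plan is to reduce every case to Corollary~\ref{corollary: fines application} by writing both the numerator and the denominator of each binomial coefficient in the ``two digit'' base-$q$ form $\alpha q+\beta$ with $0\le\alpha,\beta<q$, and then reading off the answer as a product of two ordinary binomial coefficients modulo $p$. The numerator is the same throughout: $r+t(q-1)=tq+(r-t)$. When $t\le r$ --- which covers parts (i) and (iii), whose hypotheses give $t\le r/2<r$ --- this is already a legitimate base-$q$ expansion, since then $0\le r-t<q$ and $0\le t<q$. When $t>r$ --- parts (ii) and (iv), where $r+1\le t<(q+r+1)/2$ --- a single borrow is required, giving $r+t(q-1)=(t-1)q+(q+r-t)$; here the upper bound $t<(q+r+1)/2$ is exactly what forces $0<q+r-t<q$, while $t\ge r+1$ forces $0\le t-1<q$. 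I would verify these digit ranges first, since pinning down the numerator digits is the whole purpose of the otherwise opaque hypotheses on $t$.

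With the numerator digits fixed, I would expand each denominator in the same way. For $1+j(q-1)$ one gets $1=0\cdot q+1$ when $j=0$, $q=1\cdot q+0$ when $j=1$, and $1+j(q-1)=(j-1)q+(q+1-j)$ when $j\ge2$ (the split reflecting whether a borrow is needed), with all resulting digits in $[0,q)$ under the stated bounds. For $r-1+j(q-1)$ the split is according to whether $j\le r-1$, giving $jq+(r-1-j)$, or $j\ge r$, giving $(j-1)q+(q+r-1-j)$. Applying Corollary~\ref{corollary: fines application} to each pairing of numerator and denominator digits then produces the displayed products of binomial coefficients directly, except that a digit of the form $q+r-t$ left standing on its own in parts (ii) and (iv) must be reduced using $q\equiv0\pmod{p}$; this is what turns $q+r-t$ into $r-t$ in the $j=0$ line of (ii) and the $j=t$ line of (iv).

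The remaining work is to locate the cases where the product vanishes. A product $\binom{A}{C}\binom{B}{D}$ obtained this way is $\equiv0\pmod{p}$ exactly when one of the digit-binomials has its lower entry exceeding its upper entry, and each such inequality translates into a range of $j$. For instance, in (i) with $j\ge2$ one obtains $\binom{t}{j-1}\binom{r-t}{q+1-j}$, and $q+1-j>r-t$ holds throughout the admissible range, forcing $0$; in (ii) the factor $\binom{q+r-t}{q+1-j}$ vanishes precisely when $j<t-(r-1)$; in (iv) the middle range $r-1<j<t-1$ is exactly where $\binom{q+r-t}{q+r-1-j}$ vanishes, while the boundary values give $\binom{t-1}{t-2}=t-1$ at $j=t-1$ and $\binom{q+r-t}{q+r-1-t}=q+r-t\equiv r-t$ at $j=t$. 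I expect the main obstacle to be purely organizational: keeping straight which borrow regime governs the numerator and which governs the denominator in each of the four parts and each sub-range of $j$, and confirming at every step that the reduced digits genuinely lie in $[0,q)$ so that Corollary~\ref{corollary: fines application} is applicable. Once the base-$q$ digits are correctly tabulated, each individual congruence is a one-line evaluation of a small binomial coefficient.
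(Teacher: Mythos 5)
Your proposal is correct and takes essentially the same route as the paper's own proof: both expand the numerator and denominator in two-digit base-$q$ form (with the same borrow analysis, governed by whether $t\le r$ and by the size of $j$), apply Corollary~\ref{corollary: fines application}, and detect the vanishing cases by a lower digit exceeding its upper digit. The only cosmetic difference is that you state the reduction $q+r-t\equiv r-t\pmod{p}$ explicitly, a step the paper leaves implicit.
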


\begin{proof}
 For the first part, we observe that $r+t(q-1)=tq+(r-t)$ with $0\le t,r-t<q$, and
 $1+j(q-1)=(j-1)q+(q+1-j)=jq+1-j$. If $j>1$, then $0\le j-1,q+1-j<q$, while if
 $j=0,1$, then $0\le j,1-j<q$. By Corollary \ref{corollary: fines application}, in the first case
 we have $\choice r+t(q-1),{1+j(q-1)} \cong \choice t,{j-1} \choice r-t,{q+1-j} \mod{p}$,
 while in the second case, we have $\choice r+t(q-1),{1+j(q-1)} \cong 
 \choice t,{j} \choice r-t,{1-j} \mod{p}$. Note that $q+1-j>r-t$ if and only if
 $q+1+t-j>r$, which holds since $r\le q-1<q+1$. Thus when $j>1$, $\choice r-t,{q+1-j}\cong 0 \mod{p}$,
 and so $\choice r+t(q-1),{1+j(q-1)} \cong 0\mod{p}$.
 
 For the second part, we observe that since $r+1\le t< (q+r+1)/2$, $0\le t-1<q+r-t\le q-1$, and so
 $r+t(q-1)=(t-1)q+(q+r-t)$ with $0\le t-1,q+r-t<q$. As well, $1+j(q-1)=(j-1)q+(q+1-j)=jq+1-j$,
 so if $j>1$, then $0\le j-1,q+1-j<q$, while if $j=0,1$, we have $0\le j,1-j<q$. In the first
 case, we obtain $\choice r+t(q-1),{1+j(q-1)} \cong \choice t-1,{j-1} \choice q+r-t,{q+1-j} \mod{p}$,
 while in the second case, we have $\choice r+t(q-1),{1+j(q-1)} \cong 
 \choice t-1,{j} \choice q+r-t,{1-j} \mod{p}$. Note that if $1<j<t-(r-1)$, then $q+1-j>q+r-t$ and
 so $\choice q+r-t,{q+1-j}\cong 0 \mod{p}$, which establishes that
 $\choice r+t(q-1),{1+j(q-1)} \cong 0\mod{p}$ when $1<j<t-(r-1)$.
 
 For (iii), we have $r+t(q-1)=tq+r-t$ with $0\le t,r-t<q$. As well, for $j\le t$, we have $r-1+j(q-1)=
 jq + r-1-j$ with $0\le j,r-(j+1)<q$ since $t<r/2$ and so $j+1\le t+1<r/2+1\le r$.
 By Corollary \ref{corollary: fines application}, $\choice r+t(q-1),{r-1+j(q-1)} \cong \choice t,{j} \choice r-t,{r-1-j}\mod{p}$.
 Since $j\le t$, we have $r-j\ge r-t$. If $j<t-1$, then $r-j-1>r-t$ and so $\choice r-t,{r-1-j}=0$. If $j=t-1$,
 then $\choice t,{j} \choice r-t,{r-1-j}=t$, and if $j=t$, then $\choice t,{j} \choice r-t,{r-1-j}=r-t$.
 
 Finally, for (iv), we have $r+t(q-1)=(t-1)q+q+r-t$ with $0\le
 t-1,q+r-t<q$. For $j\le t$, we have $r-1+j(q-1) = jq+r-1-j$ with $0\le
 j,r-1-j<q$ if $j+1\le r$, while if $j+1>r$, then we have $r-1+j(q-1) =
 (j-1)q+q+r-1-j$ with $0\le j-1,q+r-1-j<q$. Consider first the situation
 when $j+1>r$. In this case, by Corollary \ref{corollary: fines application}, 
 we have $\choice r+t(q-1),{r-1+j(q-1)}\cong 
 \choice t-1,{j-1} \choice q+r-t,{q+r-1-j}\mod{p}$. If $j<t-1$, then 
 $q+r-1-j> q+r-t$ and so $\choice q+r-t,{q+r-1-j}=0$. If $j=t-1$, then
 $\choice t-1,{j-1} \choice q+r-t,{q+r-1-j}\cong t-1\mod{p}$, while
 if $j=t$, then $\choice t-1,{j-1} \choice q+r-t,{q+r-1-j}\cong
 r-t\mod{p}$. Now suppose that $j+1\le r$. Note that $r<t$, so this
 implies that $j<t-1$. Thus $j=t-1$ or $t$ is not possible in this case.
 By Corollary \ref{corollary: fines application}, we have
 $\choice r+t(q-1),{r-1+j(q-1)}\cong \choice t-1,{j} \choice q+r-t,{r-1-j}\mod{p}$.
\end{proof}

\begin{definition}\label{definition: q homogeneous elements}
 For any $r$ with $1\le r\le q-1$, let $\class{r}$ denote the $q$-homogeneity class of $x^r$.
 Then for each $r$ with $1\le r\le q-1$, let $\ybasis{1}^{[r]}=\ybasis{1}\cap \class{r}$,
 so $\ybasis{1}^{[r]}=\set x^{jq+i}\mid q>i>j\ge 0,\ jq+i\cong r\mod{q-1}\endset$. 
\end{definition}

We shall use induction on $r$ to prove that for any $1\le r\le q-1$, and any nonzero 
$f\in Y_1\cap\class{r}$, $W_1+\set f\endset^S=\ckzero{x}$.

An element of $Y_1\cap \class{r}$ has the form 
$$
  \sum_{\substack{q>i>j\ge 0\\ j+qi\cong r\mod{q-1}}} \alpha_{i,j}x^{jq+i},
$$
where for each $i$ and $j$, $\alpha_{i,j}\in k$. Note that since
$j<i<q$, the maximum value for $jq+i$ is $(q-2)q+(q-1)= q(q-1)-1$, while
the minimum value is 1. Furthermore, $jq+i\cong r\mod{q-1}$ if and only
if $jq+i=r+t(q-1)$ for some integer $t$. Since $1\le jq+i\le q(q-1)-1$,
we would have $1\le 1+t(q-1)\le q(q-1)-1$, so $0\le t(q-1)\le q(q-1)-2$,
and thus $0\le t\le q-\frac{2}{q-1}$; that is, $0\le t\le q-1$. Of the
values of $t$ with $0\le t\le q-1$, we wish to determine those that
cause $x^{r+t(q-1)}$ to be an element of $\ybasis{1}^{[r]}$. Observe
that $r+t(q-1)= tq+r-t = (t-1)q+q+r-t$. If $r\ge t$, then
$r+t(q-1)=jq+i$ for $j=t$ and $i=r-t$, and we would then require
$q>r-t>t\ge 0$. Now, $1\le r\le q-1$, $t\ge 0$ ensure that $r-t<q$, but
$r-t>t$ if and only if $r>2t$, or $t<r/2$. Thus, of the integers $t$
with $0\le t\le r$, $x^{r+t(q-1)}\in \ybasis{1}^{[r]}$ if and only if
$0\le t<r/2$. Consider now the integers $t$ for which $r<t\le q-1$. Then
$r+t(q-1)=(t-1)q+q+r-t$ with $t-1>0$ and $q+1-(q-1)\le q+r-t< q+t-t=q$,
so $r+t(q-1)=jq+i$ for $j=t-1$ and $i=q+r-t$, and we have verified that
$q>i$, and $j\ge 0$ (in fact, $j\ge r\ge 1$). We also must have $i>j$,
and this holds if and only if $q+r-t>t-1$; that is, if and only if
$q+r+1>2t$, or $t<(q+r+1)/2$.

We have therefore established the following result.

\begin{lemma}\label{lemma: b sets}
 For each $r$ with $1\le r\le q-1$, 
 $$
  \ybasis{1}^{[r]}=\set x^{r+t(q-1)}\mid 0\le t<r/2\text{ or
   }r+1\le t<(q+r+1)/2\endset.
 $$ 
\end{lemma}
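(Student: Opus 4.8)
The plan is to reduce membership in $\ybasis{1}^{[r]}$ to a single congruence condition together with the uniqueness of the base-$q$ (division-with-remainder) representation of an exponent. By Definition \ref{definition: q homogeneous elements}, $\ybasis{1}^{[r]}=\set x^{jq+i}\mid q>i>j\ge0,\ jq+i\cong r\mod{q-1}\endset$. Since $1\le r\le q-1$ and every exponent involved is at least $1$, I would first note that $x^m\in\class{r}$ precisely when $m\cong r\mod{q-1}$, which is in turn equivalent to $m=r+t(q-1)$ for a unique integer $t\ge0$. Thus the lemma amounts to deciding, for each admissible $t$, whether the exponent $r+t(q-1)$ admits a representation $jq+i$ with $q>i>j\ge0$.

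Next I would pin down the range of $t$. Every element of $\ybasis{1}$ is $x^{jq+i}$ with $q>i>j\ge0$, so its degree runs from $1$ up to $(q-2)q+(q-1)=q(q-1)-1$. Imposing $1\le r+t(q-1)\le q(q-1)-1$ and using $1\le r\le q-1$ forces $0\le t\le q-1$, so only these values of $t$ need to be examined.

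The heart of the argument is a case split governed by the sign of $r-t$, resting on the observation that for any exponent $m$ there is exactly one pair $(i,j)$ with $m=jq+i$, $0\le i<q$, and $j\ge0$; hence $x^m\in\ybasis{1}$ if and only if that unique pair satisfies $i>j$. When $0\le t\le r$ we have $0\le r-t<q$, so the representation of $r+t(q-1)=tq+(r-t)$ is $(i,j)=(r-t,t)$, and the requirement $i>j$ reads $r-t>t$, i.e. $t<r/2$. When $r<t\le q-1$ the remainder $r-t$ is negative, so I rewrite $r+t(q-1)=(t-1)q+(q+r-t)$, where $0\le q+r-t<q$ and $t-1\ge0$; the representation is then $(i,j)=(q+r-t,t-1)$, and $i>j$ reads $q+r-t>t-1$, i.e. $t<(q+r+1)/2$. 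Collecting the two cases yields exactly $0\le t<r/2$ or $r+1\le t<(q+r+1)/2$, which is the asserted description.

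I expect no real obstacle: the only delicate point is to realize that membership of $x^{r+t(q-1)}$ in $\ybasis{1}$ is decided entirely by the unique base-$q$ representation of its exponent, so that one must select the correct representation (the one whose remainder lies in $[0,q)$) according to whether $r-t$ is nonnegative or negative. The remaining boundary checks — that $r-t<q$ and $q+r-t\ge0$ hold throughout, and that $j=t-1\ge0$ in the second case (automatic since $t>r\ge1$) — are routine. The resulting parameter ranges are precisely those appearing in Corollary \ref{corollary: special fines result}, which is presumably why the lemma is phrased in this way.
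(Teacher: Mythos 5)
Your proof is correct and takes essentially the same route as the paper's own argument: both bound $t$ by $0\le t\le q-1$ using the degree range $1\le jq+i\le q(q-1)-1$, then split into the cases $t\le r$ and $r<t\le q-1$, writing $r+t(q-1)=tq+(r-t)$ or $(t-1)q+(q+r-t)$ respectively, and read off $t<r/2$ and $t<(q+r+1)/2$ from the requirement $i>j$. Your explicit appeal to the uniqueness of the division-with-remainder representation is just a slightly more formal packaging of what the paper does implicitly, so there is nothing to add.
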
  

The base case $r=1$ of our inductive argument is the content of the next lemma.

\begin{lemma}\label{lemma: base case}
 For nonzero $f\in \class{1}\cap Y_1$, $W_1+\set f\endset^S=\ckzero{x}$.
\end{lemma}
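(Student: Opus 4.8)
The plan is to reduce the claim to the single task of producing $x$. Observe first that $\set x\endset^S=\ckzero{x}$: the endomorphism of $\ckzero{x}$ determined by $x\mapsto x^{m}$ carries $x$ to $x^{m}$, so $x^{m}\in\set x\endset^S$ for every $m\ge1$, and these monomials span $\ckzero{x}$. Since $P:=W_1+\set f\endset^S$ is itself a $T$-space (a sum of two $T$-spaces), it therefore suffices to prove that $x\in P$; for then $\set x\endset^S\subseteq P$, whence $\ckzero{x}=\set x\endset^S\subseteq P\subseteq\ckzero{x}$ gives the desired equality. Thus the entire problem becomes: show that $x$ can be extracted, modulo $W_1$, from the $T$-space generated by an arbitrary nonzero $q$-homogeneous $f\in Y_1\cap\class{1}$.

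By Lemma \ref{lemma: b sets} we may write $f=\sum_{t}\alpha_t\,x^{1+t(q-1)}$, the sum over those $t$ with $0\le t<1/2$ or $2\le t<(q+2)/2$, with $\alpha_t\in k$ not all zero. I would generate new elements of $P$ by applying the endomorphisms determined by $x\mapsto x+x^{m}$ and then passing to the $q$-homogeneity class $\class{1}$; recall that each $q$-homogeneous component of an element of a $T$-space again lies in that $T$-space, so every such component of $f(x+x^{m})$ belongs to $P$. Expanding $(x+x^{m})^{1+t(q-1)}=\sum_j\binom{1+t(q-1)}{j}x^{1+t(q-1)+(m-1)j}$ and reducing modulo $W_1$ by means of the explicit linear basis of $W_1$ recorded after Proposition \ref{proposition: wn/un basis} --- that is, via $x^{q^2+i}\cong x^{1+i}$, $x^{iq+j}\cong -x^{i+jq}$ for $q>i>j\ge0$, and $x^{i(q+1)}\cong 0\mod{W_1}$ --- expresses each image as a $k$-linear combination of the basis $\ybasis{1}^{[1]}$ of $V:=Y_1\cap\class{1}$, whose coefficients are the binomial numbers $\binom{1+t(q-1)}{j}$ taken modulo $p$. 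These are evaluated by Corollary \ref{corollary: fines application}, and in the relevant parameter ranges by Corollary \ref{corollary: special fines result}. In this way each of the chosen operations induces an explicit $k$-linear endomorphism of the finite-dimensional space $V$, and I would track the image of $f$ under the monoid these endomorphisms generate.

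It then remains to show that, starting from an arbitrary nonzero $\bar f\in V$, some $k$-linear combination of these images is a nonzero multiple of $x$; equivalently, that $x$ lies in the $V$-submodule generated by $\bar f$ for every nonzero $\bar f$. This is the heart of the matter and the step I expect to be the main obstacle: it amounts to a non-degeneracy statement for the coefficient matrices produced above, namely that no nonzero element of $V$ is carried into $W_1$ by all of the operators simultaneously and that $x$ is always reachable. I would establish this by the explicit evaluation of the relevant binomials modulo $p$ through Fine's theorem, organised according to the ranges appearing in Corollary \ref{corollary: special fines result}, and I anticipate needing to treat the cases $p=2$ and $p$ odd separately --- as already signalled by the sign $(-1)^{m+1}$ in Lemma \ref{lemma: basic for containment} and by the differing behaviour of the symmetric combinations $x^{iq+j}+x^{i+jq}$ in the two characteristics.
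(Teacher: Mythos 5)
Your reduction to showing $x\in W_1+\set f\endset^S$ is fine, and your reading of the reduction rules modulo $W_1$ (namely $x^{iq+j}\cong -x^{i+jq}$, $x^{i(q+1)}\cong 0$, $x^{q^2+i}\cong x^{1+i}$) is correct. But the proposal stops exactly where the lemma actually lives: you set up a family of linear operators on the finite-dimensional space $Y_1\cap\class{1}$ induced by the substitutions $x\mapsto x+x^m$, and then state that it ``remains to show'' that $x$ is reachable from every nonzero $\bar f$ --- a non-degeneracy claim you defer to an unexecuted evaluation of binomial coefficients via Fine's theorem. That claim is the entire content of the lemma, so as written this is a plan, not a proof. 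Moreover, there is a concrete reason to doubt the plan works as stated: under a one-variable substitution $x\mapsto x+x^m$, the exponents $1+t(q-1)+(m-1)j$ arising from distinct pairs $(t,j)$ collide, and passing to the $q$-homogeneous class $\class{1}$ and then reducing modulo $W_1$ collapses them further, so the individual binomial coefficients $\choice 1+t(q-1),{j}$ get summed against one another before you ever see them. The paper avoids precisely this loss of information by invoking Corollary \ref{corollary: commutative in kxy} to work in $\ckzero{x,y}$ and substituting $x\mapsto x+y$ with a \emph{fresh} variable: in $f(x+y)-f(x)$ the monomials $x^{1+j(q-1)}y^{(t-j)(q-1)}$ are linearly independent, so each coefficient $\choice 1+t(q-1),{1+j(q-1)}$ survives intact, and Corollary \ref{corollary: special fines result}(ii) then gives the clean identity $g=(xy^{q-1}-x^q)u$ with $u=\sum_t\alpha_t(1-t)y^{(t-1)(q-1)}$.

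The second missing ingredient is structural, and no amount of matrix computation in your framework obviously replaces it. After producing $y^{q-1}u+u^q\in W_1+\set f\endset^S$, the paper faces a dichotomy: either $y^{q-1}u+u^q\in W_1$, in which case a coefficient comparison against the basis $\ybasis{1}$ forces $\alpha_t=0$ for all $t\ge2$, hence $f=\alpha_0x$ and $x\in W_1+\set f\endset^S$ directly; or else $U_1\subsetneq U_1+\set y^{q-1}u+u^q\endset^T\subseteq W_1+\set f\endset^S$, and then Sundararaman's classification of precomplete $T$-ideals (Theorem 8 of \cite{sund}) pins this larger $T$-ideal down as $\set x-x^q\endset^T$, after which a short argument (using $x^{q^2}\cong x\mod{U_1}$) rules out $W_1+\set f\endset^S$ being a proper $T$-ideal. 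Your proposal has no counterpart to this precompleteness step, and it is what lets the paper conclude for all $q$ uniformly without proving the kind of global non-degeneracy statement you would need. To salvage your approach you would have to either carry out, for every prime power $q$, the reachability argument you only sketch --- including a proof that the compressed coefficient matrices are never simultaneously degenerate --- or import the two ideas above; as it stands, the gap is genuine.
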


\begin{proof}
 By Lemma \ref{lemma: b sets}, $\ybasis{1}^{[1]}=\set x^{1+t(q-1)}\mid
 t=0\text{ or }2\le t<(q+2)/2\endset$. Let $f$ be a nonzero element of 
 $\class{1}\cap Y_1$. Then there exist $\alpha_t\in k$ such that 
 $$
  f=\alpha_0x+\sum_{2\le t< (q+2)/2}\alpha_tx^{1+t(q-1)}.
 $$
 By Corollary \ref{corollary: commutative in kxy}, it suffices to prove
 that $x$ belongs to the $T$-space of $\ckzero{x,y}$ that is generated
 by $\set x+x^q,x^{q+1},f\endset$; that is, that $x$ belongs to the
 $T$-space of $\ckzero{x,y}$ that is generated by $\w1,{x,y}$ and $f$.
 For convenience, let us use $W_1$ to refer to either $\w1,{x,y}$ or to
 $\w1,x$, and let $W_1+\set f\endset^S$ denote both  the $T$-space of
 $\ckzero{x,y}$ and the $T$-space of $\ckzero{x}$ that is generated by
 $\set x+x^q,x^{q+1},f\endset$. In $\ckzero{x,y}$, let $g$ denote the
 $q$-homogeneous component of $xy^{q-1}$ in $f(x+y)-f(x)\in W_1+\set
 f\endset^S$, so $g\in W_1+\set f\endset^S$ as well. We have
 \begin{align*} 
  g&=\alpha_0x+\mkern-15mu\sum_{2\le t< (q+2)/2}\mkern-20mu\alpha_t(\sum_{j=0}^t \choice 1+t(q-1),{1+j(q-1)} x^{1+j(q-1)}y^{(t-j)(q-1)}\\
    &\hskip 2in -\bigl(\alpha_0x+\mkern-15mu\sum_{2\le t< (q+2)/2}\mkern-20mu\alpha_tx^{1+t(q-1)})\bigr)\\
   &=\sum_{2\le t< (q+2)/2}\mkern-15mu\alpha_t(\sum_{j=0}^{t-1} \choice 1+t(q-1),{1+j(q-1)} x^{1+j(q-1)}y^{(t-j)(q-1)}).
 \end{align*}
 Now apply Corollary \ref{corollary: special fines result} (ii) with $r=1$ 
 (note that in particular, this gives $\choice 1+t(q-1),{1+j(q-1)}\cong 0\mod{p}$ 
 when $1<j<t$) to obtain that
 \begin{align*}
  g &=\sum_{2\le t< (q+2)/2}\alpha_t( (1-t) xy^{t(q-1)} + (t-1) x^qy^{(t-1)(q-1)})\\
 &=(xy^{q-1}-x^q)\mkern-20mu\sum_{2\le t< (q+2)/2}\alpha_t( (1-t) y^{(t-1)(q-1)}.
 \end{align*}
 Let $u=\sum_{2\le t\le (q+1)/2}\alpha_t (1-t) y^{(t-1)(q-1)}$, so
 $(xy^{q-1}-x^q)u=g\in W_1+\set f\endset^S$. Now, $x^qu\cong
 -xu^q\mod{W_1}$, so $g\cong x(y^{q-1}u+u^q)\mod{W_1}$ and thus
 $x(y^{q-1}u+u^q)\in W_1+\set f\endset^S$. Apply the endomorphism of
 $\ckzero{x,y}$ that is determined by sending $x$ to $y^{q^2-1}$ while
 fixing $y$ to obtain that  $y^{q^2-1}(y^{q-1}u+u^q)\in W_1+\set
 f\endset^S$. By Lemma \ref{lemma: almost like 1},
 $y^{q^2-1}(y^{q-1}u+u^q)\cong y^{q-1}u+u^q\mod{U_1}$, and so
 $y^{q-1}u+u^q\in W_1+\set f\endset^S$. Thus the $T$-ideal $\set
 y^{q-1}u+u^q\endset^T$ is contained in  $W_1+\set f\endset^S$. Let
 $U=U_1+\set y^{q-1}u+u^q\endset^T$. Then $U_1\subseteq U\subseteq
 W_1+\set f\endset^S$. We claim that if $y^{q-1}u+u^q\in W_1$, then
 $W_1+\set f\endset=\ckzero{x,y}$. For suppose that $y^{q-1}u+u^q\in
 W_1$. Since $u^q\cong -u\mod{W_1}$, we would then have $y^{q-1}u-u\in
 W_1$; that is, $\sum_{2\le t< (q+2)/2}\alpha_t((1-t) y^{(t)(q-1)}+
 (1-t) y^{(t-1)(q-1)})\in W_1$. Set
 $m=\left\lceil\frac{q+2}{2}\right\rceil-1$, and observe that
 $t<(q+2)/2$ means $t\le m$. We would then have
 $$
  -\alpha_2y^{q-1} +\alpha_m(1-m)y^{m(q-1)} 
  +\sum_{2\le t< m} ((1-t)\alpha_t-t\alpha_{t+1})y^{t(q-1)}\in W_1.
 $$ 
 Apply the endomorphism of $\ckzero{x,y}$ that is determined by sending 
 $y$ to $x$ while fixing $x$ to obtain that
 $$
 -\alpha_2x^{q-1}+ \alpha_m(1-m)x^{m(q-1)} +\sum_{2\le t< m} ((1-t)\alpha_t-t\alpha_{t+1})x^{t(q-1)}\in W_1.
 $$
 Note that $t(q-1)=(t-1)q+q-t$ and $q-t>t-1$ if and only if $q+1>2t$;
 that is, if and only if $t<m$. Since this condition holds in the above
 summation, $x^{t(q-1)}\in \ybasis{1}$ for every $t$ with $2\le t<m$, as
 is $x^{q-1}$. If $p$ is odd, then $m=(q+1)/2$, and then
 $m(q-1)/2=(q+1)(q-1)/2$. As $(q-1)/2$ would then be a positive integer,
 we would have $y^{m(q-1)}\in W_1$. If $p$ is even, then $m=q/2$ and
 $y^{m(q-1)}\in\ybasis{1}$. Thus $\alpha_2=0$, and for each $t$ with
 $2\le t\le m-1$, $(1-t)\alpha_t-t\alpha_{t+1}=0$, so $\alpha_t=0$ for
 each $t$ with $2\le t\le m$. But then $f=\alpha_0x$, and since $f\ne0$,
 we obtain $x\in W_1+\set f\endset^S$, as claimed. 

 It remains to consider the case when $y^{q-1}u+u^q\notin W_1$. In this
 case, $U_1\subsetneq U\subseteq W_1 +\set f\endset^S$. Suppose that
 $x\notin W_1+\set f\endset^S$. Then $U\ne\ckzero{x}$. Since $U_1$ is 
 precomplete, $U$ is also precomplete, and so by Theorem 8 of \cite{sund},
 $U=\set x-x^{q^d}\endset^T$ for some  positive integer $d$. But
 $U_1=\set x-x^{q^2}\endset^T\subsetneq U$,  so we must have $d=1$. But
 then $U=\set x-x^q\endset^{T}$, the unique maximal (actually, maximum)
 $T$-space of $\ckzero{x,y}$, which was shown in \cite{rankin} also to be a
 maximal $T$-space. Thus either $W_1+\set f\endset^S=\ckzero{x,y}$, or
 else $W_1+\set f\endset^S$ is a $T$-ideal. Suppose that $W_1+\set
 f\endset^S\ne \ckzero{x,y}$, so that $W_1+\set f\endset^S$ is a
 $T$-ideal of $\ckzero{x,y}$. Then $x^{q+1}x^i\in W_1+\set f\endset^S$
 for every positive integer $i$. In particular, $x^{q^2}\in W_1+\set
 f\endset^S$. But $x\cong x^{q^2}\mod{U_1}$ and thus modulo $W_1$, which
 means that $x\in W_1+\set f\endset^S$. As this contradicts our
 assumption that $W_1+\set f\endset^S\ne \ckzero{x,y}$, it follows that
 $W_1+\set f\endset^S=\ckzero{x,y}$, as required.
\end{proof}

\begin{proposition}\label{proposition: main inductive step}
 Let $2\le r\le q-1$. Then for any nonzero $f\in \class{r}\cap Y_1$, $W_1+\set f\endset^S=\ckzero{x}$.
\end{proposition}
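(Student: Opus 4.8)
The plan is to induct on $r$, with the base case $r=1$ supplied by Lemma \ref{lemma: base case}, and to follow that lemma's template. First I would invoke Corollary \ref{corollary: commutative in kxy} to pass to $\ckzero{x,y}$, so that it suffices to prove $x\in W_1+\set f\endset^S$ there, with $W_1$ read as $\w1,{x,y}$. Using Lemma \ref{lemma: b sets} I would record $f=\sum_{t\in S}\alpha_t x^{r+t(q-1)}$, where $S=\set 0\le t<r/2\endset\cup\set r+1\le t<(q+r+1)/2\endset$ and not all $\alpha_t$ vanish.

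The reduction mechanism is linearization followed by collapse. Since $W_1+\set f\endset^S$ is a $T$-space, it contains $f(x+y)-f(x)$ and hence every $q$-homogeneous component of that element. I would extract the component $h$ in which $\deg_x\cong 1\mod{q-1}$; by Corollary \ref{corollary: special fines result} (i) and (ii) its terms are the $x^{1+j(q-1)}y^{(r-1)+(t-j)(q-1)}$ with coefficients $\alpha_t\choice r+t(q-1),{1+j(q-1)}$, and every term has $\deg_y\cong r-1\mod{q-1}$. Applying the endomorphism $x\mapsto y^{q^2-1}$ and reducing modulo $U_1\subseteq W_1$ by Lemma \ref{lemma: almost like 1} absorbs every power of $y^{q^2-1}$, so $h$ is carried to a polynomial $h_0$ in $y$ alone, each of whose monomials has degree $\cong r-1\mod{q-1}$; that is, $h_0$ lies in the $q$-homogeneity class of $y^{r-1}$ and $h_0\in W_1+\set f\endset^S$. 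The point of descending to class $r-1$ is that, whenever $h_0\notin W_1$, its reduction modulo $W_1$ is a nonzero class-$(r-1)$ element $g$ of $\ybasis{1}$ in the variable $y$, with $g\in W_1+\set f\endset^S$. Relabelling the variable in the induction hypothesis (which is symmetric in the name of the generator) then gives $W_1+\set g\endset^S=\ckzero{y}$, whence $y\in W_1+\set f\endset^S$, and so $x\in W_1+\set f\endset^S$ as well.

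It remains to treat the degenerate possibility $h_0\in W_1$. Here I would translate $h_0\in W_1$ into explicit linear relations among the $\alpha_t$ (exactly as the relation $y^{q-1}u+u^q\in W_1$ was unwound in Lemma \ref{lemma: base case}), using that $v^q\cong -v\mod{W_1}$ and that $y^{t(q-1)}\in \ybasis{1}$ precisely when $t$ is below the top of its range, with the borderline value governed by the parity of $p$. A second extraction---now the component with $\deg_x\cong r-1\mod{q-1}$, whose coefficients are computed by Corollary \ref{corollary: special fines result} (iii) and (iv)---supplies the complementary relations. I expect these relations to force all $\alpha_t$ with $t>0$ to vanish, leaving $f=\alpha_0 x^r$ with $\alpha_0\ne0$; this single-term case is then finished directly, by extracting a coefficient $\choice r,{p^a}\not\cong 0\mod p$ from $(x+y)^r$ and running the same $x\mapsto y^{q^2-1}$ collapse to reach $x$. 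Should some $\alpha_t$ with $t>0$ survive instead, the construction produces, as in Lemma \ref{lemma: base case}, a one-variable polynomial outside $W_1$ generating a $T$-ideal strictly above the precomplete $U_1$, and Theorem 8 of \cite{sund} forces $W_1+\set f\endset^S=\ckzero{x,y}$.

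The main obstacle is the binomial bookkeeping over the second range $r+1\le t<(q+r+1)/2$. Unlike the base case, parts (ii) and (iv) of Corollary \ref{corollary: special fines result} leave the extra contributions $\choice t-1,{j}\choice q+r-t,{\cdot}$, so neither extracted component factors as neatly as the $(xy^{q-1}-x^q)u$ of Lemma \ref{lemma: base case}. The crux is to show that after the $x\mapsto y^{q^2-1}$ collapse this debris either cancels modulo $W_1$ or contributes only to class $r-1$, so that the induction hypothesis can absorb it, and then to carry the degenerate-case relations---together with the $p$ odd versus $p$ even split at the top of the $t$-range---through to their conclusion.
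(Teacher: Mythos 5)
Your generic branch is sound, and it is essentially the paper's own argument with the roles of $x$ and $y$ interchanged: since $f(x+y)$ is symmetric in $x$ and $y$, your component of class $1$ in $x$ and class $r-1$ in $y$, followed by the collapse $x\mapsto y^{q^2-1}$, produces (up to renaming the variable) exactly the same one-variable polynomial as the paper's component of $x^{r-1}y$ followed by $y\mapsto x^{q^2-1}$; when that polynomial is nonzero modulo $W_1$, the induction finishes as you say. For the same reason, though, your ``second extraction'' is an illusion: $\choice r+t(q-1),{1+j(q-1)}=\choice r+t(q-1),{r-1+(t-j)(q-1)}$, so the class-$(r-1)$-in-$x$ component is just the swap of the one you already took and yields no complementary relations. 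The genuine gap is in the degenerate case. The relations obtained there are $\alpha_t=0$ on the upper range $r+1\le t<(q+r+1)/2$ together with $(r-t)\alpha_t+(t+1)\alpha_{t+1}=0$ on the lower range, and these do \emph{not} force $f=\alpha_0x^r$: since $(t+1)\choice r,{t+1}=(r-t)\choice r,t$, the assignment $\alpha_t=(-1)^t\choice r,t\,\alpha_0$ is a nonzero solution, so for odd $r$ the degenerate case contains the entire line spanned by $f_0=\sum_{0\le t\le (r-1)/2}(-1)^t\choice r,t x^{r+t(q-1)}$, which is congruent to $\tfrac12(x-x^q)^r$ modulo $W_1$ when $p>2$. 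This exceptional family is precisely the hard part of the proposition: its collapse lies \emph{inside} $W_1$, so neither your induction step nor your monomial endgame ever sees it. (Your endgame is also flawed on its own terms: when $r$ is a power of $p$, e.g.\ $f=x^p$, one has $(x+y)^r=x^r+y^r$, so every coefficient $\choice r,{p^a}$ with $0<p^a<r$ vanishes and there is nothing to extract.)

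What the paper does for $f_0$, and what your sketch is missing, is a second, genuinely different manipulation. Starting again from the component $g$ of $x^{r-1}y$ in $f_0(x+y)$, one rewrites $g$ modulo $W_1$ using $uv^q\cong -u^qv\mod{\w1,{x,y}}$ so that every term becomes linear in $y$, i.e.\ $g\cong y\,h(x)\mod{W_1}$ for an explicit one-variable $h$; collapsing then gives $h\in W_1+\set f_0\endset^S$, and---the crucial extra yield of the $y$-linear form---substituting $x\mapsto u$, $y\mapsto v$ in $y\,h(x)$ shows $\set h\endset^T\subseteq W_1+\set f_0\endset^S$. One then verifies that $h$ is a nonzero element of the span of $\set x^{qi+j}+x^{i+qj}\mid q>i\ge j\ge0,\ i+j>0\endset$, so $h\in W_1$ but $h\notin U_1$; hence $U_1\subsetneq U_1+\set h\endset^T\subseteq W_1+\set f_0\endset^S$, and only now can the precompleteness of $U_1$ and Theorem 8 of \cite{sund} be invoked. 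Contrast this with your fallback: in the degenerate case the collapsed polynomial is in $W_1$, not outside it, and mere membership of some polynomial in $W_1+\set f\endset^S$ gives no $T$-\emph{ideal} above $U_1$ to feed to Sundararaman's theorem---the $T$-ideal containment comes only from the $y$-linear rewriting. So your proposal reproduces the paper's generic reduction but omits the idea that resolves the exceptional binomial family, which is the crux of the proof.
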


\begin{proof}
 We shall prove this by induction on $r$, with the base case provided by
 Lemma \ref{lemma: base case}. Suppose that $r\ge2$, and that the result
 holds for all smaller integers. Let $f\in \class{r}\cap Y_1$ with $f\ne
 0$. By Lemma \ref{lemma: b sets}, we may assume that 
 $$
   f=\sum_{\substack{0\le t<r/2\text{ or}\\r+1\le t<(q+r+1)/2}} \alpha_t x^{r+t(q-1)}.
 $$
 Note that if $r=q-1$, then there are no indices $t$ for which $r+1\le t< (q+r+1)/2$. 

 By Corollary \ref{corollary: commutative in kxy}, it suffices to prove that $W_1+\set f\endset^S=\ckzero{x,y}$.
 In $\ckzero{x,y}$, let $g$ denote the $q$-homogeneous component of $x^{r-1}y$ in $f(x+y)$. Then
 $$
 g=\sum_{\substack{0\le t<r/2\text{ or}\\r+1\le t<(q+r+1)/2}} \alpha_t \sum_{0\le j\le t}\choice r+t(q-1),{r-1+j(q-1)} x^{r-1+j(q-1)}y^{1+(t-j)(q-1)}.
 $$
 For convenience, let $l=\left\lceil\frac{r}{2}\right\rceil-1$, so that $t<r/2$ if and only if $t\le l$. Then
 by Corollary \ref{corollary: special fines result}, (iii) for $t<r/2$ and (iv) for $r+1\le t<(q+r+1)/2$, we find that
 \begin{align*}
   g&=r\alpha_0x^{r-1}+\sum_{1\le t\le l}\alpha_t \biggl(  t x^{r-1+(t-1)(q-1)}y^{q}+ (r-t) x^{r-1+t(q-1)}y\biggr)\\ 
 &\hskip20pt+\mkern-10mu\sum_{r+1\le t<(q+r+1)/2}\mkern-10mu \alpha_t \sum_{0\le j\le r-1}\choice t-1,j\choice q+r-t,{r-1-j} x^{r-1+j(q-1)}y^{1+(t-j)(q-1)}\\
 &\hskip20pt+\mkern-10mu\sum_{r+1\le t<(q+r+1)/2} \alpha_t \biggl( (t-1) x^{r-1+(t-1)(q-1)}y^{q}
  +(r-t) x^{r-1+t(q-1)}y\biggr).
 \end{align*}
 We now apply to $g$ the endomorphism of $\ckzero{x,y}$ that is determined by sending $y$ to $x^{q^2-1}$ while fixing $x$.
 By Lemma \ref{lemma: almost like 1}, the result is congruent modulo $U_1$ to the element that is obtained by deleting $x^{q^2-1}$,
 which we shall denote by $h$. Thus, after regrouping the terms in the first summation, we find that
 \begin{align*}
   h&=(r-l)\alpha_lx^{r-1+l(q-1)}+\sum_{0\le t\le l-1}  \biggl( (r- t)\alpha_t+ (t+1)\alpha_{t+1}\biggr) x^{r-1+t(q-1)}\\
    &\hskip30pt+\sum_{r+1\le t<(q+r+1)/2} \alpha_t \sum_{0\le j\le r-1}\choice t-1,j\choice q+r-t,{r-1-j} x^{r-1+j(q-1)}\\
 &\hskip30pt+\sum_{r+1\le t<(q+r+1)/2} \alpha_t \biggl( (t-1) x^{r-1+(t-1)(q-1)}
  +(r-t) x^{r-1+t(q-1)}\biggr).
 \end{align*}
 Furthermore, since $g\in W_1+\set f\endset^S$, it follows that $h\in W_1+\set f\endset^S$ as well.
 In the first summation above, we note that $t\le l$ if and only if
 $t\le (r-1)/2$. If $l=(r-1)/2$ (possible of course only if $r$ is odd),
 then $x^{r-1+l(q-1)}=(x^{q+1})^{(r-1)/2}\in W_1$, and otherwise, $t\le
 l<(r-1)/2$ has $r-1+t(q-1)=tq+r-1-t$ with $0\le t<r-1-t<q$, so
 $x^{r-1+t(q-1)}\in \ybasis{1}$. A related observation can be made for 
 the second summation displayed above. For $0\le j\le r-1$, we find that
 $r-1+j(q-1)=jq+r-1-j$ with $0\le j, r-1-j<q$, so $x^{r-1+j(q-1)}\in
 \ybasis{1}$ if and only if $r-1-j>j$; that is, if and only if
 $j<(r-1)/2$. Observe that if $j=(r-1)/2$ (possible only when $r$ is odd
 of course), then $r-1+j(q-1)=(q+1)(r-1)/2$ and so in this case,
 $x^{r-1+j(q-1)}\in W_1$. Thus in the second summation above, we may
 exclude the value $j=(r-1)/2$. When $(r-1)/2<j\le r-1$, then
 $x^{r-1+j(q-1)}=x^{jq+r-1-j}\cong -x^{(r-1-j)q+j}
 =-x^{r-1+(r-j-1)(q-1)}\mod{W_1}$, and $(r-1)/2=r-1-(r-1)/2>r-1-j\ge 0$.
 Thus, modulo $W_1$,
 \begin{align*} 
   h&\cong (r-l)\alpha_lx^{r-1+l(q-1)}+\sum_{0\le t\le l-1}  \biggl( (r- t)\alpha_t+ (t+1)\alpha_{t+1}\biggr) x^{r-1+t(q-1)}\\
 &\hskip15pt+\mkern-20mu\sum_{r+1\le t<(q+r+1)/2} \alpha_t \sum_{0\le j\le l-1}\biggl({\textstyle\choice t-1,j\choice q+r-t,{r-1-j}} 
  -{\textstyle\choice t-1,{r-1-j}\choice q+r-t,{j}}\biggr) x^{r-1+j(q-1)}\\
 &\hskip15pt+\mkern-20mu\sum_{r+1\le t<(q+r+1)/2} \alpha_t \biggl( (t-1) x^{r-1+(t-1)(q-1)}
  +(r-t) x^{r-1+t(q-1)}\biggr).
 \end{align*}    
 For each $t$ with $r+1\le t<(q+r+1)/2$, and each $j$ with $0\le j\le l-1$, let $\beta_{t,j}=\choice t-1,j\choice q+r-t,{r-1-j} -\choice t-1,{r-1-j}\choice q+r-t,{j}$, and set
 \begin{align*} 
   h_1&=(r-l)\alpha_lx^{r-1+l(q-1)}+\sum_{0\le t\le l-1}  \biggl( (r- t)\alpha_t+ (t+1)\alpha_{t+1}\biggr) x^{r-1+t(q-1)}\\
 &\hskip40pt+\mkern-20mu\sum_{r+1\le t<(q+r+1)/2} \alpha_t\mkern-10mu \sum_{0\le j<(r-1)/2}\beta_{t,j} x^{r-1+j(q-1)}
 \end{align*}
 and 
 $$
 h_2=\sum_{r+1\le t<(q+r+1)/2} \alpha_t \biggl( (t-1) x^{r-1+(t-1)(q-1)}
  +(r-t) x^{r-1+t(q-1)}\biggr),
 $$
 so $h_1+h_2\cong h\mod{W_1}$ and thus $h_1+h_2\in W_1+\set f\endset^S$. Furthermore, we have established that if $l<(r-1)/2$,
 then $h_1$ is in the linear span of $\set u\in \ybasis{1}\mid u=x^{r-1+t(q-1)},\ 0\le t<(r-1)/2\endset$, while if $l=(r-1)/2$,
 then $(r-l)\alpha_lx^{r-1+l(q-1)}\in W_1$ and $h_1-(r-l)\alpha_lx^{r-1+l(q-1)}$ is in the linear span of
 $\set u\in \ybasis{1}\mid u=x^{r-1+t(q-1)},\ 0\le t<(r-1)/2\endset$. As for $h_2$, note that
 for $r+1\le t<(q+r+1)/2$, we have $q+r>2t$ and so $q+r-t-1>t-1$. Thus $r-1+t(q-1)=(t-1)q+(q+r-1-t)$, with $0<r\le t-1<q+r-t-1
 =q-1-(t-r)<q-1$ and so $x^{r-1+t(q-1)}\in \ybasis{1}$ for each $t$ with $r+1\le t<(q+r+1)/2$. Thus
 $h_2$ is in the linear span of
 $\set u\in \ybasis{1}\mid u=x^{r-1+t(q-1)},\ r\le t<(q+r+1)/2\endset$. Since these two subsets of $\ybasis{1}$ are disjoint, it follows that
 if either $h_1\notin W_1$ or $h_2\ne0$, then $h\cong h_1+h_2\mod{W_1}$ means that $h\ne 0$ and so $h$ is a nonzero element of 
 $\class{r-1}\cap V_1$. But then by the inductive hypothesis, $W_1+\set h\endset^S=\ckzero{x}$, and since $h\in W_1+\set f\endset^S$,
 it follows that $W_1+\set f\endset^S=\ckzero{x}$. 
 
 It remains to consider the situation when $h_2=0$ and $h_1\in W_1$; 
 that is, either $r$ is even, so $l<(r-1)/2$ and $h_1=0$, or $r$ is 
 odd, so $l=(r-1)/2$ and $h_1-(r-l)\alpha_lx^{r-1+l(q-1)}=0$. For this 
 discussion, let $m=\left\lceil\frac{q+r+1}{2}\right\rceil -1$, so 
 $r+1\le t\le m$. Then we have
 \begin{align*}
  0&=h_2=\mkern-15mu\sum_{r+1\le t\le m} \alpha_t \biggl( (t-1) x^{r-1+(t-1)(q-1)}+(r-t) x^{r-1+t(q-1)}\biggr)\\
  &=r\alpha_{r+1}x^{r-1+r(q-1)}+ (r-m)\alpha_m  x^{r-1+m(q-1)}\\
  &\hskip1in+\sum_{r+1\le t\le m-1}  \biggl( \alpha_t(r-t) +\alpha_{t+1} t \biggr)x^{r-1+t(q-1)}
 \end{align*}
 As $x^{r-1+t(q-1)}\in \ybasis{1}$ for each $t$ with $r+1\le t<(q+r+1)/2$, 
 it follows that $r\alpha_{r+1}=0$, $(r-m)\alpha_m=0$, and for each $t$ with 
 $r+1\le t\le m-1$, we have $\alpha_t(r-t) +\alpha_{t+1} t=0$. Since neither 
 $t\cong 0\mod{p}$ nor $r-t\cong 0\mod{p}$ for these values of $t$, it follows 
 that $\alpha_t=0$ for each $t$ with $r+1\le t<(q+r+1)/2$. We shall take 
 advantage of this information to dramatically simplify the presentation of
 $h_1$.  Let $l=\left\lceil\frac{r-1 }{2}\right\rceil -1$. Then
 $$
  h_1= (r-l)\alpha_lx^{r-1+l(q-1)}+\sum_{0\le t\le l-1}  \biggl( (r- t)\alpha_t+ (t+1)\alpha_{t+1}\biggr) x^{r-1+t(q-1)}
 $$
 and so $(r- t)\alpha_t+ (t+1)\alpha_{t+1}=0$ for $0\le t\le l$. Since 
 neither $r-t\cong 0\mod{p}$ nor $t+1\cong 0\mod{p}$ for any $t$ under 
 consideration, we have $\alpha_{t+1}=-\frac{(r-t)}{(t+1)}\alpha_t$ for 
 each $t$ with $0\le t\le l-1$. If $r$ is even, then $l<(r-1)/2$ and then 
 we also have $\alpha_l=0$, which then implies that $\alpha_t=0$ for every 
 $t$. Since this would imply that $f=0$, we may conclude that $r$ is odd, 
 and $l=(r-1)/2$. From the fact that $\alpha_{t+1}=-\frac{(r-t)}{(t+1)}\alpha_t$ 
 for each $t$ with $0\le t\le l-1$, we find that $\alpha_t=(-1)^t\choice r,t \alpha_0$ 
 for each $t$ with $0\le t\le (r-1)/2$, and so without loss of generality, we 
 may assume that 
 $$
  f=\sum_{0\le t\le (r-1)/2} (-1)^t\choice r,t x^{r+t(q-1)}.
 $$ 
 We shall not make use of the fact, but it may intrigue the reader to note that for $(r+1)/2\le t\le r$, we have
 $r+t(q-1)=(t-1)q+q+r-t$ and $0\le t-1,r-t$, so $x^{r+t(q-1)}\cong -x^{(q+r-t)q+t-1}\mod{W_1}$, and
 $-x^{(q+r-t)q+t-1} =-x^{q^2+(r-t)q+t-1}\cong -x^{1+(r-t)q+t-1}=-x^{(r-t)q+t}\mod{U_1}$. As $-x^{(r-t)q+t}=
 -x^{(r-t)q+t-r+r}=-x^{r+(r-t)(q-1)}$, and so 
 $$
  (-1)^t\choice r,t x^{r+t(q-1)}\cong (-1)^{t+1}\choice r,{r-t} x^{r+(r-t)(q-1)}\mod{W_1}.
 $$ 
 Since $(-1)^{t+1}=(-1)^{r-t}$, it follows that 
 $$
 \sum_{0\le t\le r} (-1)^t\choice r,t x^{r+t(q-1)}\cong
 2\sum_{0\le t\le (r-1)/2} (-1)^t\choice r,t x^{r+t(q-1)}=2f\mod{W_1}.
 $$
 Thus $2f\cong (x-x^q)^r\mod{W_1}$, and so if $p>2$, $f\cong\frac12(x-x^q)^r\mod{W_1}$.
 
 We now return to our study of $W_1+\set f\endset^S$. 
 Our work above, when specialized to the current $f$, shows that $g$, the $q$-homogeneous component of $x^{r-1}y$ in $f(x+y)$, is given by 
 $$
 g=rx^{r-1}y\mkern15mu+\sum_{1\le t\le (r-1)/2} (-1)^t \choice r,t \biggl(  t x^{r-1+(t-1)(q-1)}y^{q}+ (r-t) x^{r-1+t(q-1)}y\biggr),
 $$
 and we know that $g\in W_1+\set f\endset^S$.
 Recall that for any $u,v\in \ckzero{x,y}$, $uv^q\cong -u^qv\mod{W_1}$, so $g$ is congruent modulo $W_1$ to
 $$
  rx^{r-1}y\mkern15mu+\sum_{1\le t\le (r-1)/2} (-1)^t \choice r,t \biggl( (-t) x^{q(r-1+(t-1)(q-1))}y+ (r-t) x^{r-1+t(q-1)}y\biggr),
 $$
 so this element belongs to $W_1+\set f\endset^S$. Recall also that $U_1=\set x-x^{q^2}\endset^T\subseteq W_1$,
 so we obtain that, modulo $U_1$,
 \begin{align*}
  rx^{r-1}y\mkern5mu &+\mkern-10mu\sum_{1\le t\le (r-1)/2} (-1)^t \choice r,t \biggl( (-t) x^{q(r-t)+(t-1)q^2}y+ (r-t) x^{r-1+t(q-1)}y\biggr)\\
 &\mkern-40mu \cong y\biggl(rx^{r-1}\mkern5mu+\mkern-10mu\sum_{1\le t\le (r-1)/2} \mkern-5mu(-1)^t \choice r,t \bigl( (-t) x^{q(r-t)+(t-1)}+ (r-t) x^{r-1+t(q-1)}\bigr)\biggr),
 \end{align*}
 and thus  
 $$
 y\biggl(rx^{r-1}\mkern15mu+\sum_{1\le t\le (r-1)/2} (-1)^t \choice r,t \bigl( (-t) x^{q(r-t)+(t-1)}+ (r-t) x^{r-1+t(q-1)}\bigr)\biggr)
 $$
 belongs to $W_1+\set f\endset^S$.
 Apply the endomorphism of $\ckzero{x,y}$ that is determined by sending $y$ to $x^{q^2-1}$ while fixing $x$ to this element,
 and then apply Lemma \ref{lemma: almost like 1} to obtain that
 $$
  h=rx^{r-1}\mkern15mu+\sum_{1\le t\le (r-1)/2} (-1)^t \choice r,t \bigl( (-t) x^{q(r-t)+(t-1)}+ (r-t) x^{r-1+t(q-1)}\bigr)\biggr)
 $$
 belongs to $W_1+\set f\endset^S$.    
 Observe that
 \begin{align*}
   h&= rx^{r-1}\mkern15mu+\sum_{1\le t\le (r-1)/2} (-1)^t \choice r,t \bigl( (-t) x^{q(r-t)+(t-1)}+ (r-t) x^{r-1+t(q-1)}\bigr)\\
   &= rx^{r-1} + rx^{q(r-1)} \mkern15mu+\sum_{2\le t\le (r-1)/2} (-1)^t \choice r,t (-t) x^{q(r-t)+(t-1)}\\
   &\hskip50pt+\mkern-20mu \sum_{1\le t\le (r-3)/2}\mkern-15mu (-1)^t{\choice r,t} (r-t) x^{r-1+t(q-1)}\\
   &\hskip70pt+ (-1)^{(r-1)/2}{ \choice r,{(r-1)/2}\frac{r-1}{2}} (x^{q+1})^{(r-1)/2}.
 \end{align*}
 Now, 
 $$ \sum_{2\le t\le (r-1)/2}\mkern-20mu (-1)^t{\textstyle \choice r,t} (-t) x^{q(r-t)+(t-1)}
  = \mkern-20mu\sum_{1\le t\le (r-3)/2}\mkern-20mu (-1)^{t+1}{\textstyle \choice r,{t+1}} (-1)(t+1) x^{q(r-t-1)+t}.
 $$ 
 As well, 
  $$
  (-1)^{t+1} {\textstyle\choice r,{t+1}} (-1)(t+1)=(-1)^{t}r!/(t!(r-t-1)!)=(-1)^t{\textstyle\choice r,t} (r-t).
  $$
  Thus with
  $$
   \beta=(-1)^{(r-1)/2}{\textstyle\choice r,{(r-1)/2}(r-1)/2}\not\cong 0\mod{p},
  $$
  we have
 \begin{align*}
  h&=rx^{r-1} + rx^{q(r-1)} +\sum_{1\le t\le (r-3)/2} (-1)^t \choice r,t (r-t)( x^{q(r-t-1)+t}\\
  &\hskip2.5in + x^{tq+r-1-t}) + \beta (x^{q+1})^{(r-1)/2}\\
   &=\sum_{0\le t\le (r-3)/2} (-1)^t \choice r,t (r-t)( x^{q(r-t-1)+t}
   + x^{q(t)+(r-t-1)}) + \beta (x^{q+1})^{(r-1)/2}.
 \end{align*}
   Now, $0\le t\le (r-1)/2\le (q-3)/2$ means that $q>q-3\ge r-t-1\ge (r-1)/2\ge t\ge 0$ and $r-t-1+t=r-1>0$, so
   $h$ is in the linear span of $\set x^{qi+j}+x^{i+qj}\mid q>i\ge j\ge 0,\ i+j>0\endset$, and since $h\ne 0$,
   it follows that $h\in W_1-U_1$. As well, we have $yh\in W_1+\set f\endset^S$, so $\set h\endset^T\subseteq W_1+\set f\endset^S$, and $h\notin U_1$
   means that $U_1\subsetneq U=U_1+\set h\endset^T\subseteq W_1+\set f\endset^S$. As in the proof of
   Lemma \ref{lemma: base case}, this implies that $U=\set x-x^q\endset^T$ and $W_1+\set f\endset^S=\ckzero{x,y}$, as required. This completes the proof of the inductive
   step, and so the result follows.
\end{proof}

\begin{theorem}
 $W_1$ is a maximal $T$-space of $\ckzero{x}$.
\end{theorem}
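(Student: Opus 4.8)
The plan is to obtain maximality of $W_1$ as a direct consequence of the decomposition $\ckzero{x}=Y_1\oplus W_1$ recorded after Definition~\ref{definition: yn def}, together with the base case Lemma~\ref{lemma: base case} and the inductive Proposition~\ref{proposition: main inductive step}. First I would recall that $W_1$ is a proper $T$-space, since the dimension count above gives $\dim(\ckzero{x}/W_1)=\choice q,2>0$. So let $V$ be any $T$-space with $W_1\subsetneq V\subseteq\ckzero{x}$; the goal is to prove that $V=\ckzero{x}$, which is exactly the statement that no $T$-space lies strictly between $W_1$ and $\ckzero{x}$.

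Next I would extract from $V$ a witness in $Y_1$. Since $\ckzero{x}=Y_1\oplus W_1$ and $V$ properly contains $W_1$, there is some $f\in V\setminus W_1$; writing $f=f_Y+f_W$ with $f_Y\in Y_1$ and $f_W\in W_1\subseteq V$, the difference $f_Y=f-f_W$ lies in $V$ and is nonzero. Because $Y_1$ is spanned by the monomials $\ybasis{1}$, each of which is $q$-homogeneous, $Y_1$ is graded by $q$-homogeneity, so $f_Y$ splits into $q$-homogeneous components that again lie in $Y_1$, and at least one component $g$ is nonzero and belongs to $\class{r}\cap Y_1$ for some $r$ with $1\le r\le q-1$ (every monomial of positive degree falls into one of these $q-1$ classes). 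As already observed in the reductions preceding this section, every $q$-homogeneous component of an element of a $T$-space again lies in that $T$-space, so $g\in V$.

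Finally I would apply the results already in hand to this single homogeneous witness. If $r=1$, Lemma~\ref{lemma: base case} gives $W_1+\set g\endset^S=\ckzero{x}$; if $2\le r\le q-1$, Proposition~\ref{proposition: main inductive step} gives the same conclusion. Since $g\in V$ and $W_1\subseteq V$, we have $W_1+\set g\endset^S\subseteq V$, and therefore $V=\ckzero{x}$. Hence $W_1$ is a maximal $T$-space.

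The theorem itself is thus a short assembly, and the genuine difficulty has already been discharged in the base case and the inductive step. The main obstacle there, which I would expect to be the crux of the whole argument, is the delicate binomial-coefficient bookkeeping carried out through Corollary~\ref{corollary: special fines result}: from the $q$-homogeneous component of $f(x+y)$ one must isolate (after the substitution $y\mapsto x^{q^2-1}$ and reduction modulo $U_1$) a nonzero element of the lower class $\class{r-1}$ on which the induction hypothesis can act, while simultaneously handling the degenerate case in which that element collapses into $W_1$ and one is instead forced to identify $f$, up to scalar, with $(x-x^q)^r$ modulo $W_1$. Granting all of that, the only points left to check in the theorem proper are that $Y_1$ is graded by $q$-homogeneity and that $T$-spaces are closed under the extraction of $q$-homogeneous components.
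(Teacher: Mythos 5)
Your proposal is correct and takes essentially the same approach as the paper: the paper likewise reduces maximality, via the decomposition $\ckzero{x}=Y_1\oplus W_1$ and passage to $q$-homogeneous components (each of which lies in any $T$-space containing the element), to Lemma~\ref{lemma: base case} for $r=1$ and Proposition~\ref{proposition: main inductive step} for $2\le r\le q-1$. Your write-up merely makes explicit the reduction that the paper delegates to the discussion following Definition~\ref{definition: yn def}.
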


\begin{proof}
 We must prove that for any $f\in \ckzero{x}-W_1$, $W_1+\set f\endset^S=\ckzero{x}$. As observed in the discussion
 following Definition \ref{definition: yn def}, it suffices to prove this for each $q$-homogeneous $f\in Y_1$, the 
 linear span of $\ybasis{1}$. But each $q$-homogeneous element of $Y_1$ is in the class of $\class{r}$ for some $r$ with
 $1\le r\le q-1$. The result follows now from Lemma \ref{lemma: base case} for $r=1$, and from Proposition 
 \ref{proposition: main inductive step} for $2\le r\le q-1$.
\end{proof}

\section{Summary}
 We have shown that for any prime $p$, and any finite field $k$ of characteristic $p$ and order $q$, the $T$-spaces 
 $W_{2^n}=\set x+x^{q^{2^n}},x^{q^{2^n}+1}\endset^S$, $n\ge 0$ are proper, and for any $0\le m<n$, $W_{2^m}+W_{2^n}=\ckzero{x}$.
 We have also proven that $W_1$ is maximal. In \cite{rankin}, for $p>2$, we had proven that the $T$-spaces $\set x+x^{q^{2^n}}\endset^S$,
 $n\ge0$, were proper and had the property that for any $0\le m<n$, $\set x+x^{q^{2^m}}\endset^S
 +\set x+x^{q^{2^n}}\endset^S=\ckzero{x}$, and so were able to conclude that $\ckzero{x}$ had infinitely many
 maximal $T$-spaces. From our knowledge of the $k$-linear basis for $W_{2^n}$ that we have obtained in this paper,
 it follows that $x^{q^{2^n}+1}\notin \set x+x^{q^{2^n}}\endset^S$, so none of the $T$-spaces $\set x+x^{q^{2^n}}\endset^S$
 are maximal in $\ckzero{x}$. For $p=2$, the situation is somewhat different. Also in \cite{rankin}, we had proven that for $p=2$,
 the family of $T$-spaces $\set x+x^{q},x^{q^{2^n}+1}\endset^S$, $n\ge0$, were proper and had the property that the sum
 of any two is $\ckzero{x}$. But for $p=2$, we have $W_{2^n}=\set x+x^{q^{2^n}},x^{q^{2^n}+1}\endset^S\subseteq \set x+x^{q},x^{q^{2^n}+1}\endset^S$,
 and also from our knowledge of a basis for $W_{2^n}$, we may observe that $x+x^q\notin W_{2^n}$ for $n>0$. For
 $n=0$, the two $T$-spaces coincide, and we have proven that $W_{2^0}$ is a maximal $T$-space of $\ckzero{x}$. 
 It seems possible that for $p>2$, $W_{2^n}$ is a maximal $T$-space of $\ckzero{x}$ for every $n\ge0$, and for $p=2$,  
 $\set x+x^{q},x^{q^{2^n}+1}\endset^S$ is a maximal $T$-space of $\ckzero{x}$ for each $n\ge0$.

\end{document}